\newcommand{\norm}[1]{\left\Vert#1\right\Vert}
\newcommand{\R}{\mathbb{R}}
\newcommand{\C}{\mathbb{C}}
\newcommand{\blue}[1]{\textcolor{blue}{#1}}
\renewcommand{\Re}{\operatorname{Re}}
\newcommand\reallywidehat[1]{\arraycolsep=0pt\relax%
\begin{array}{c}
\stretchto{
  \scaleto{
    \scalerel*[\widthof{\ensuremath{#1}}]{\kern-.5pt\bigwedge\kern-.5pt}
    {\rule[-\textheight/2]{1ex}{\textheight}} 
  }{\textheight} %
}{0.5ex}\\           
#1\\                 
\rule{-1ex}{0ex}
\end{array}
}
\DeclareMathOperator{\dvie}{div}
\DeclareMathOperator{\Dom}{Dom}
\newtheorem{thm}{Theorem}[section]
\newtheorem{lem}[thm]{Lemma}
\theoremstyle{definition}
\newtheorem{rem}[thm]{Remark}
\numberwithin{equation}{section}
\author[A. Biswas]{Animesh Biswas}
\address{Department of Mathematics\\
University of Nebraska-Lincoln\\
210 Avery Hall, Lincoln\\
NE 68588, United States of America}
\email{abiswas2@unl.edu}
\author[M. De Le\'on-Contreras]{Marta De Le\'on-Contreras}
\address{Department of Mathematics and Statistics\\
University of Reading\\
Whiteknights, PO Box 220\\
Reading RG6 6AX, United Kingdom}
\email{m.deleoncontreras@reading.ac.uk}
\author[P. R. Stinga]{Pablo Ra\'ul Stinga}
\address{Department of Mathematics\\
Iowa State University\\
396 Carver Hall, Ames\\
IA 50011, United States of America}
\email{stinga@iastate.edu}
\thanks{Research supported by Simons Foundation grant 580911 (PRS)}
\keywords{Master equations, Harnack inequalities, H\"older estimates, extension problem, method of semigroups}
\subjclass[2010]{Primary: 35R11, 35B65, 35K65. Secondary: 35R09, 58J35}
\begin{document}

\title[Master equations]{Harnack inequalities
and H\"older estimates \\ for master equations}

\begin{abstract}
We study master equations of the form
$$(\partial_t+L)^su=f\quad\hbox{in}~\R\times\Omega$$
where $L$ is a divergence form elliptic operator and $\Omega\subseteq\R^n$.
These are nonlocal equations of order $2s$ in space and $s$ in time
that take into account the values of $u$ everywhere in $\Omega$ and for past times.
We show parabolic interior and boundary Harnack inequalities and local parabolic H\"older continuity of
solutions. To this end, we prove a characterization of
fractional powers of parabolic operators $\partial_t+L$ with a degenerate parabolic extension problem.
\end{abstract}

\maketitle

\section{Introduction}

We study regularity estimates for master equations driven by fractional powers of parabolic operators of the form
\begin{equation}\label{eq:Hsequation}
H^s u(t,x)\equiv(\partial_t+L)^su(t,x)=f(t,x)\quad0<s<1
\end{equation}
for $t\in\R$ and $x\in\Omega$, where $\Omega$ is an open subset of $\R^n$, $n\geq1$,
that may be unbounded, and $L$ is an elliptic operator
subject to appropriate boundary conditions on $\partial\Omega$.

Master equations are of great interest in mechanics, elasticity, biology and physics.
Consider, for example, the semipermeable membrane problem in biology, which is also equivalent to the 
parabolic Signorini problem in elasticity. In this problem, we have an anisotropic diffusion process
happening inside a cell modeled as
a region of points $(x,y)\in\mathcal{R}=\Omega\times(0,\infty)$. The diffusion is driven by a parabolic equation
$\partial_tU=-LU+\partial_{yy}U$, where $U(t,x,y)$ is the pressure inside the cell
at time $t$ at the point $(x,y)\in \mathcal{R}$. Here $-L$ is an elliptic operator in the variable $x\in\Omega$.
For instance, $L$ can be the Laplacian $-\Delta$ in $\Omega$ or a divergence form operator as in \eqref{eq:operatorL}.
The bottom of the cell $\mathcal{T}=\Omega\times\{y=0\}$
is a semipermeable membrane, meaning that if the pressure $U(t,x,0)$ on the membrane
becomes smaller than the pressure $\phi(t,x)$ from outside, then
fluid can enter the cell, otherwise there is no flux. On the wall of the cell $\partial\Omega\times(0,\infty)$ one may
assume that pressure is zero, that is $U=0$, or there is no flux, namely, $ \partial_\nu U=0$,
where $\partial_\nu$ denotes the exterior normal derivative at the boundary $\partial \Omega$.
As in \cite{DL}, if we assume that the membrane $\mathcal{T}$ is very thin, then the model becomes
\begin{equation}\label{eq:semi_perm_L}
    \begin{cases}
\partial_t U =-LU +\partial_{yy} U&\hbox{for}~t\in\R,~(x,y)\in\mathcal{R}, \\
 U(t,x,0) \geq \phi(t,x)&\hbox{for}~t\in\R,~\hbox{on}~\mathcal{T}, \\ 
 -\partial_y U(t,x,0) \geq 0&\hbox{on}~\mathcal{T}, \\
 -\partial_y U(t,x,0) = 0 &\hbox{whenever}~U> \phi~\hbox{on}~\mathcal{T},\\
 U= 0 \quad \hbox{or}\quad \partial_\nu U= 0 &\hbox{for}~t\in\R,~\hbox{on the wall}~\partial \Omega \times (0, \infty).
\end{cases}
\end{equation}
Then, as done in \cite{Biswas}, it can be seen that the flux is given by
$$-\partial_y U(t,x,0) = (\partial_t+L)^{1/2} U(t,x,0)\qquad\hbox{on}~\mathcal{T}.$$
Moreover, the semipermeable membrane model \eqref{eq:semi_perm_L} turns out to be equivalent to the following
obstacle problem on the membrane for $u(t,x):=U(t,x,0)$ when $s=1/2$:
$$\begin{cases}
u\geq\phi&\hbox{for all}~(t,x)\in\R\times\Omega\\
(\partial_t+L)^su\geq0&\hbox{for}~(t,x)\in\R\times\Omega,\\
(\partial_t+L)^su=0&\hbox{whenever}~u>\phi,\\
u= 0 \quad \hbox{or}\quad \partial_\nu u= 0 &\hbox{for}~t\in\R,~\hbox{on}~\partial \Omega.
\end{cases}$$
This free boundary problem for $0<s<1$ when $L=-\Delta$ and $\Omega=\R^n$ was studied in \cite{ACM}.
It can also be seen that the problem of biological invasions where there is a road with fast diffusion
considered in \cite{BRR} is equivalent to a local-nonlocal system driven by a local parabolic equation and a nonlocal equation
as in \eqref{eq:Hsequation}, see \cite{Biswas}. Other applications of master equations
can be found in \cite{Allen-Caffarelli-Vasseur,Caffarelli-Silvestre-Master,Stinga-Torrea-SIAM} and references therein. 

As we will show, master equations as in \eqref{eq:Hsequation} are
nonlocal in space and time, and take into account the past (memory). Generally speaking, these equations take the form
\begin{equation}\label{eq:master}
\int_0^\infty\int_{\Omega}(u(t-\tau,z)-u(t,x))K(t,x,\tau,z)\,dz\,d\tau=f(t,x)
\end{equation}
for $t\in\R$ and $x\in\Omega$, where $K$ is some kernel. These are also related to continuous
time random walks, see \cite{MK}.
L.~A.~Caffarelli and L.~Silvestre proved H\"older estimates for equations as in \eqref{eq:master}
when the right hand side $f$ is bounded, see \cite{Caffarelli-Silvestre-Master}.
They assumed some structural conditions on the kernel $K$ that enforce regularity of $u$.
On the other hand, the most basic master equation is given by the fractional powers of the 
heat operator $(\partial_t-\Delta)^su=f$, and this case was analyzed in great detail in \cite{Stinga-Torrea-SIAM}.

In general, the elliptic operator $L$ we consider in \eqref{eq:Hsequation} is a nonnegative operator of the form
\begin{equation}\label{eq:operatorL}
L=-\dvie(a(x)\nabla) + c(x)
\end{equation}
in a domain $\Omega\subseteq{\R^n}$ that may be unbounded.
Here $a(x)=(a^{ij}(x))$ is a bounded, measurable, symmetric matrix defined in $\Omega$, satisfying
the uniform ellipticity condition, that is, for some $\Lambda\geq1$,
$$\Lambda^{-1}|\xi|^2\leq a^{ij}(x)\xi_i\xi_j\leq\Lambda|\xi|^2$$
for a.e. $x\in\Omega$, for all $\xi\in\R^n$, and $c(x) \in L^{\infty}_{\mathrm{loc}}(\Omega)$.
Some concrete operators we consider in \eqref{eq:operatorL} are the following:
	\begin{enumerate}[$(1)$]
		\item $L=-\dvie(a(x)\nabla) + c(x) $ in a bounded domain $\Omega$ with homogeneous Dirichlet or Neumann (conormal)
		boundary condition. The potential function $c(x) \geq 0$ and $c(x) \in L^{\infty}(\Omega)$. If $c(x) = 0$ and $a(x) = I$, then we get $-\Delta_D$ and $-\Delta_N$,
		the Dirichlet and Neumann Laplacians, respectively.  \smallskip
		\item The harmonic oscillators $L=-\Delta+|x|^2$ and $L=-\Delta+|x|^2-n$ in $\Omega=\R^n$. \smallskip
		\item The Laguerre differential operator $L =\frac{1}{4}( - \Delta + |x|^2 + \sum^n_{i=1} \frac{1}{x^2_i} \left( \alpha^2_i - \frac{1}{4} \right))$,
		for $\alpha_i>-1$, in $\Omega=(0, \infty)^n$. \smallskip
		\item The ultraspherical operator $L=- \frac{d^2}{d x^2}+\frac{\lambda(\lambda -1)}{\sin^2 x}$, for $\lambda>0$, in $\Omega=(0, \pi)$.
		\smallskip
		\item The Laplacian $-\Delta$ in $\Omega=\R^n$.  \smallskip
		\item The Bessel operator $L=-\frac{d^2}{d x^2}+\frac{\lambda(\lambda -1)}{x^2}$, for $\lambda>0$, in $\Omega=(0,\infty)$.
	\end{enumerate}
	Observe that in $(2)$--$(6)$ the ellipticity constant is $\Lambda=1$. The operators $(2)$--$(4)$ arise in classical orthogonal expansions and
	the Bessel operator in $(6)$ appears when considering radial-in-space solutions to $(\partial_t-\Delta)^su=f$.

	The precise notion of $H^s=(\partial_t+L)^s$ is a delicate point. Indeed,
		a definition in terms of the Fourier transform in time and the spectral resolution of $L$
		leads to considering the multi-valued complex function $z\mapsto z^s$.
		In this paper, we develop a semigroup approach that allows us to overcome this difficulty.
		Observe that, unlike the case treated in \cite{Stinga-Torrea-SIAM}, where $L=-\Delta$,
		the Fourier transform in space is not the most useful tool anymore because $L$ is not
		translation invariant and has nonsmooth coefficients, and, in general, the domain $\Omega$ is not the whole space $\R^n$.
		In addition, $H^su$ needs to be understood
		in the weak sense, see Section \ref{section:pointwise} for more details.
		
By using our semigroup method for the concrete cases $(1)$--$(6)$ we are able to obtain an integro-differential formula
for $H^su$ which shows that \eqref{eq:Hsequation} is indeed a master equation as in \eqref{eq:master}, but
in divergence form.

	\begin{thm}\label{thm:formula}
		Let $L$ be as in  $(1)$--$(6)$. If $u,v\in \Dom(H^s)\cap C^\infty_c(\R\times\Omega)$ then
		\begin{align*}
		\langle &H^su,v\rangle=\langle(\partial_t+L)^su,v\rangle \\
		&=\int_0^\infty\int_{\R}\int_\Omega\int_{\Omega} K_s(\tau,x,z)(u(t-\tau,x)-u(t-\tau,z))({{v(t,x)-v(t,z)}})\,dz\,dx\,dt\,d\tau \\
		&\quad+\int_0^\infty\bigg[\int_\R\int_\Omega\frac{\big(1-e^{-\tau L}1(x)\big)}{|\Gamma(-s)|\tau^{1+s}}u(t,x){{v(t,x)}}\,dx\,dt\\
		&\quad\qquad\qquad-\int_\R\int_\Omega e^{-\tau L}1(x)\frac{(u(t-\tau,x)-u(t,x))}{|\Gamma(-s)|\tau^{1+s}}{{v(t,x)}}\,dx\,dt\bigg]\,d\tau \blue{,}
		\end{align*}
		where
		$$K_s(\tau,x,z)=\frac{W_\tau(x,z)}{2|\Gamma(-s)|\tau^{1+s}},$$
		$W_\tau(x,z)$ is the heat kernel for $L$, and
		$$e^{-\tau L}1(x)=\int_\Omega W_\tau(x,z)\,dz.$$
	\end{thm}
	
	\begin{rem}
	There are cases in which $e^{-\tau L}1(x)\equiv 1$. This occurs, for example, when $L$ is as in \eqref{eq:operatorL}
	with $c(x)=0$ and has either Neumann boundary condition or $\Omega=\R^n$, or when $L$ is
	the Laplacian $-\Delta$ on $\R^n$. Then, in Theorem \ref{thm:formula} we get
	\begin{align*}
	\langle H^su,v\rangle &=\int_0^\infty\int_{\R}\int_\Omega
	\int_\Omega K_s(\tau,x,z)(u(t-\tau,x)-u(t-\tau,z))({{v(t,x)-v(t,z)}})\,dz\,dx\,dt\,d\tau \\
	&\quad-\int_0^\infty\int_\R\int_\Omega \frac{(u(t-\tau,x)-u(t,x))}{|\Gamma(-s)|\tau^{1+s}}{{v(t,x)}}\,dx\,dt\,d\tau.
	\end{align*}
	The second integral term above is equal to
	$$-\int_\R\int_\Omega (D_{\mathrm{left}})^su(t,x){{v(t,x)}}\,dx\,dt$$
	where $(D_{\mathrm{left}})^s$ denotes the fractional power of the derivative from the left,
	which coincides with the Marchaud fractional derivative, acting on the variable $t\in\R$, see \cite{Bernardis}.
\end{rem}

\begin{rem}\label{rem:gaussian}
If the heat kernel $W_\tau(x,z)$ of $L$ satisfies Gaussian bounds,
then we can obtain pointwise estimates for the kernel $K_s(\tau,x,z)$ in Theorem \ref{thm:formula}.
Here we present two cases for an operator $L$ as in $(1)$ for $c\equiv0$.
\begin{enumerate}[(a)]
\item If $\Omega$ is a bounded domain and the coefficients $a(x)$ are bounded and measurable then, by
the results of \cite{Davies}, we have the Gaussian upper bound
$$W_\tau(x,z)\leq\frac{C}{\tau^{n/2}}e^{-|x-z|^2/(c\tau)}\qquad x,z\in\Omega,~\tau>0,$$
for some constants $C,c>0$. From here, it readily follows the estimate
$$K_s(\tau,x,z)\le \frac{\Lambda}{|x-z|^{n+2+2s}+\tau^{n/2+1+s}},\qquad\hbox{for every}~x,z\in\Omega,~\tau>0,$$
for some $\Lambda>0$.
\item If $\Omega=\R^n$ and the coefficients $a(x)$ are bounded and measurable
then, by Aronson's estimates \cite{Aronson}, we have the two-sided heat kernel Gaussian bound
$$\frac{C_1}{\tau^{n/2}}e^{-|x-z|^2/(c_1\tau)} \leq W_\tau(x,z)\leq\frac{C_2}{\tau^{n/2}}e^{-|x-z|^2/(c_2\tau)}\qquad x,z\in\R^n,~\tau>0,$$
for some constants $C_1,c_1,C_2,c_2>0$. Then, an upper bound for $K_s(\tau,x,z)$ as in $\mathrm{(a)}$ holds,
and we also have the lower bound
$$K_s(\tau,x,z)\ge \frac{\lambda}{|x-z|^{n+2+2s}},\qquad\hbox{when}~\tau\sim |x-z|^2,$$
for some $\lambda>0$.
\end{enumerate}
These estimates show that $(\partial_t+L)^s$ is an equation of order $s$ in time and $2s$ in space.
\end{rem}

For master equations \eqref{eq:Hsequation} with a general $L$ as in \eqref{eq:operatorL}
we prove parabolic interior and boundary Harnack inequalities, local boundedness and
parabolic H\"older regularity. For notation see Section \ref{section:pointwise}.

\begin{thm}[Parabolic interior Harnack inequality]\label{harn_para}
	Let $L$ be as in \eqref{eq:operatorL}.
	Let $B_{2r}$ be a ball of radius $2r$, $r>0$, such that $B_{2r}\subset\subset\Omega$.
	There exists a constant $c>0$ depending only on $n$, $s$, $\Lambda$ and $r$
	such that if $u= u(t,x)\in\Dom(H^s)$ is a solution to
	$$\begin{cases}
	H^su= 0&\hbox{for}~(t,x)\in R:=(0,1)\times B_{2r}\\
	u\geq 0&\hbox{for}~(t,x)\in(-\infty,1)\times\Omega,
	\end{cases}$$
	then 
	$$\sup_{R^{-}} u \leq c \inf_{R^{+}} u$$
	where $R^-:= (1/4,1/2)\times B_{r}$ and $R^+:=(3/4,1)\times B_r$.
	Moreover, solutions $u\in\Dom(H^s)$ to $H^su=0$ in $R$ are locally bounded
	and locally parabolically $\alpha$-H\"older continuous in $R$, for some exponent
	$0<\alpha<1$ depending on $n$, $\Lambda$ and $s$. More precisely, for any
	compact set $K\subset R$ there exists $C=C(c,K,R)>0$ such that 
	$$\|u\|_{C^{\alpha/2,\alpha}_{t,x}(K)}\leq C\|u\|_{L^2(\R\times\Omega)}.$$
\end{thm}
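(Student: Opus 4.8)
The plan is to use the degenerate parabolic extension characterization of $H^s$ established in the previous sections to replace the nonlocal equation $H^su=0$ by a \emph{local} degenerate parabolic equation in $\R\times\Omega\times(0,\infty)$, and then to invoke the classical Harnack inequality and De~Giorgi--Nash--Moser theory for parabolic equations with Muckenhoupt $A_2$ weights (in the spirit of Chiarenza--Serapioni, Guti\'errez--Wheeden, and Ishige). Concretely, given $u\in\Dom(H^s)$, its extension $U=U(t,x,y)$, $y>0$, solves, schematically,
$$y^{a}\,\partial_tU=\dive_{(x,y)}\!\big(y^{a}\,\mathbf A(x)\,\nabla_{(x,y)}U\big)-y^{a}\,c(x)\,U,\qquad a:=1-2s\in(-1,1),$$
with $\mathbf A(x)=\diag(a(x),1)$ (and the evident modification of $L$ in cases $(2)$--$(6)$), $U(t,x,0)=u(t,x)$, and the Neumann identity $-\lim_{y\to0^+}y^{a}\partial_yU=c_sH^su$ for a positive constant $c_s$. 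Here $|y|^{a}$ is an $A_2$ weight on $\R^{n+1}$ with constant depending only on $a$ (hence on $s$), and $\mathbf A$ is bounded, measurable and uniformly $\Lambda$-elliptic, so locally in $\Omega$ the equation is precisely of the type covered by the degenerate parabolic regularity theory. Crucially, $U$ is obtained from $u$ by a positivity-preserving operator (the positive subordination kernel in $\tau$ composed with $e^{-\tau L}$, which is positivity preserving for all the operators in $(1)$--$(6)$) that sees only the past values $u(t-\tau,\cdot)$, $\tau>0$; hence $u\ge0$ on $(-\infty,1)\times\Omega$ forces $U\ge0$ on $(-\infty,1)\times\Omega\times(0,\infty)$. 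The same representation, together with the $L^2(\R\times\Omega)$-contractivity of $e^{-\tau H}$, gives $\|U(\cdot,\cdot,y)\|_{L^2(\R\times\Omega)}\le C\|u\|_{L^2(\R\times\Omega)}$ uniformly in $y$.

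Next, since $H^su=0$ in $R=(0,1)\times B_{2r}$, the conormal flux $y^{a}\partial_yU$ tends to $0$ as $y\to0^+$ over $R$, so the even reflection $U(t,x,-y):=U(t,x,y)$ is a weak solution of the \emph{same} degenerate parabolic equation in the full cylinder $(0,1)\times B_{2r}\times\R$, and $|y|^{a}$ is still $A_2(\R^{n+1})$; the bounded zeroth-order term $c(x)U$ (respectively, the potentials in $(2)$--$(4),(6)$, which are bounded on $\overline{B_{2r}}$) is absorbed by the standard Aronson--Serrin-type perturbation of the degenerate Harnack inequality, using $c\ge0$ when convenient. Applying the parabolic Harnack inequality for nonnegative weak solutions of degenerate parabolic equations with $A_2$ weights to the reflected $U$ on a cylinder around $R^-\cup R^+\subset\{y=0\}$, together with a Moser/Krylov--Safonov chaining in the $(t,x,y)$ variables (the chaining is what makes $c$ depend on $r$, since the sharp waiting-time cylinders $R^\mp$ leave no spare room), yields
$$\sup_{R^-\times\{|y|\le\rho\}}U\le c\inf_{R^+\times\{|y|\le\rho\}}U$$
for a small $\rho>0$; restricting to $y=0$ and using $U(\cdot,\cdot,0)=u$ gives $\sup_{R^-}u\le c\inf_{R^+}u$ with $c=c(n,s,\Lambda,r)$.

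For the regularity statement, the interior De~Giorgi--Nash--Moser estimate in the same weighted class gives, for a cylinder $Q'$ compactly contained in $(0,1)\times B_{2r}\times(-1,1)$, a bound $\|U\|_{C^{\alpha/2,\alpha}_{t,(x,y)}(Q')}\le C\big(\int_{Q}|U|^2\,|y|^{a}\,dy\,dx\,dt\big)^{1/2}$ with $\alpha=\alpha(n,\Lambda,s)\in(0,1)$; tracing at $y=0$ transfers the $(\alpha/2,\alpha)$ parabolic modulus to $u$, and bounding the weighted $L^2$-norm of $U$ by $\|u\|_{L^2(\R\times\Omega)}$ via the uniform estimate of the first step yields the claimed $\|u\|_{C^{\alpha/2,\alpha}_{t,x}(K)}\le C\|u\|_{L^2(\R\times\Omega)}$ for compact $K\subset R$. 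Local boundedness follows from the same Moser iteration.

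The heart of the matter is the first step: one must verify that the extension $U$ genuinely inherits nonnegativity on the whole backward-in-time slab from $u\ge0$ on $(-\infty,1)\times\Omega$ --- this is where the memory (nonlocality in time) of the master equation is really used, through the positive-kernel representation of $U$ --- and that after even reflection the problem falls verbatim within the hypotheses of the available degenerate parabolic Harnack and De~Giorgi theorems, with the $A_2$ and ellipticity constants (hence $\alpha$ and $c$) depending only on $n$, $s$, $\Lambda$. Incorporating the unbounded potentials of cases $(2)$--$(4),(6)$ is a secondary point, settled locally since those potentials are bounded on compact subsets of $\Omega$.
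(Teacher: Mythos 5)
Your proposal is correct and follows essentially the same route as the paper: extend $u$ via the subordination formula (whose positive kernel yields $U\ge0$), reflect evenly in $y$ using the vanishing Neumann datum on $R$ to get a nonnegative weak solution of the $A_2$-degenerate parabolic equation in a full cylinder, and then apply Ishige's Harnack inequality and the associated local boundedness and H\"older estimates, restricting to the trace $y=0$. The only differences are presentational (your explicit chaining discussion and the remark on absorbing the locally bounded potentials), not structural.
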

To present the parabolic boundary Harnack inequality, let $\Omega_0\subset\Omega$ and
$\tilde{x}\in\partial\Omega_0$ such that $B_{{2r}}(\tilde{x})\subset\Omega$, for some $r>0$ fixed.
Suppose that, up to a rotation and translation, ${B_{2r}(\tilde{x})} \cap \partial \Omega_0$
can be represented as the graph of a Lipschitz function $g:\R^{n-1}\to\R$ in the $e_n=(0,\ldots,0,1)$-direction,
such that $g$ has Lipschitz constant $M>0$. Thus,
$$\Omega_0\cap {B_{2r}(\tilde{x}) } = \{(x',x_n):\: x_n>g(x')\}\cap {B_{2r}(\tilde{x}) }$$
$$\partial \Omega_0 \cap B_{2r}(\tilde{x})  = \{ (x',x_n) :\,\, x_n = g(x') \} \cap  B_{2r}(\tilde{x}) .$$
Fix a point $(t_0,x_0) \in (-2,2) \times \Omega_0$ such that $t_0>1$.

\begin{thm}[Parabolic boundary Harnack inequality]\label{harn_bd}
	Let $L$  be as in \eqref{eq:operatorL}.
	Assume the geometric conditions on $\Omega_0$ and $\Omega$ described above. Then there exists a constant $C>0$
	depending on $n$, $\Lambda$, $r$, $M$, $s$, $t_0-1$ and $g$, such that if
	$u(t,x)\in\Dom(H^s)$ is a solution to
	$$\begin{cases}
	H^s u = 0&\hbox{for}~(t,x)\in(-2,2)\times(\Omega_0\cap {B_{2r}(\tilde{x}) })\\
	u \geq 0&\hbox{for}~(t,x)\in(-\infty,2)\times\Omega
	\end{cases}$$
	such that $u$ vanishes continuously on $(-2,2) \times ((\Omega\setminus \Omega_0) \cap {B_{2r}(\tilde{x}) })$ then
	$$\sup_{(-1,1) \times (\Omega_0\cap {B_r(\tilde{x}) })} u(t,x) \leq C u(t_0, x_0).$$
\end{thm}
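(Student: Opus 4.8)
The plan is to deduce the parabolic boundary Harnack inequality for $H^s=(\partial_t+L)^s$ from the corresponding statement for a \emph{local} degenerate parabolic equation obtained via the extension problem advertised in the introduction. Concretely, one extends $u$ to a function $U=U(t,x,y)$ on $(-2,2)\times(\Omega_0\cap B_{2r}(\tilde x))\times(0,\infty)$ solving a degenerate parabolic equation of the form $\partial_t U + LU = \tfrac{1-2s}{y}\,U_y + U_{yy}$ (that is, $\partial_t U = \dive_{x,y}(y^{1-2s}A(x,y)\nabla_{x,y}U)$ with the Muckenhoupt $A_2$ weight $y^{1-2s}$), with $U(t,x,0)=u(t,x)$ and the Neumann-type condition on $y=0$ given (up to a constant) by $-\lim_{y\to0^+}y^{1-2s}U_y = H^su = 0$ in the interior region. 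Since $H^su=0$ there, $U$ solves a \emph{homogeneous} degenerate parabolic equation across the slit $\{y=0\}$ after even reflection $U(t,x,-y):=U(t,x,y)$, so $U$ is a nonnegative weak solution of a degenerate parabolic equation in the weighted class of Chiarenza--Serapioni / Gutiérrez--Wheeden in the full cylinder $(-2,2)\times(\Omega_0\cap B_{2r}(\tilde x))\times(-\infty,\infty)$.

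Next I would invoke the known boundary Harnack principle for such degenerate parabolic equations near a Lipschitz lateral boundary. The geometric hypotheses on $\Omega_0$ (Lipschitz graph $g$ with constant $M$ on $B_{2r}(\tilde x)$) lift, upon multiplying by the extension variable $y$, to a Lipschitz (indeed a product-type Lipschitz) domain $\big(\Omega_0\cap B_{2r}(\tilde x)\big)\times\R$ in $(x,y)$-space, and the even reflection in $y$ means the extended solution has no boundary condition to satisfy on $\{y=0\}$ — only on the lateral part $\partial\Omega_0\times\R$, where $u$ (hence $U$) vanishes continuously. Thus the elliptic-measure / comparison machinery for $A_2$-degenerate parabolic operators on Lipschitz cylinders (backward Harnack chaining, the interior Harnack inequality from Theorem~\ref{harn_para} applied to the extension, a Carleson-type estimate, and iteration of oscillation decay at the boundary) yields
$$\sup_{(-1,1)\times(\Omega_0\cap B_r(\tilde x))\times\{0\}} U \;\le\; C\,U(t_0,x_0,0),$$
with $C$ depending only on $n,\Lambda,r,M,s,t_0-1$ and $g$; restricting to $y=0$ gives the claimed bound for $u$. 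The hypothesis $t_0>1$ (and $(t_0,x_0)\in(-2,2)\times\Omega_0$) is exactly what is needed so that the reference point sits forward in time relative to the region $(-1,1)\times(\Omega_0\cap B_r(\tilde x))$, making the parabolic Harnack chain run in the correct time direction.

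The main obstacle is twofold. First, one must carefully justify that the nonlocal solution $u\in\Dom(H^s)$ genuinely admits such an extension with the stated boundary behavior — this is where the semigroup/extension characterization of $(\partial_t+L)^s$ developed earlier in the paper does the heavy lifting, and one must check that $u\geq0$ on $(-\infty,2)\times\Omega$ (a \emph{global-in-space-and-past} condition, not just a local one) propagates to $U\geq0$ on the whole extended cylinder; this uses the positivity of the Poisson-type kernel defining $U$ in terms of $u$ via the heat semigroup $e^{-r(\partial_t+L)}$. Second, one must verify that the boundary Harnack principle in the degenerate parabolic setting applies with constants independent of the (possibly unbounded) $\Omega$ and uniform over the listed operators $L$; since the extension equation near $\partial\Omega_0$ only sees the ellipticity constant $\Lambda$ and the local geometry, this reduces — after a standard freezing/localization argument and the interior Harnack inequality of Theorem~\ref{harn_para} — to the Lipschitz-domain boundary Harnack inequality for $y^{1-2s}$-weighted parabolic operators, which is available in the literature. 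Modulo these two points, the remaining steps (the Carleson estimate and the doubling-of-oscillation iteration) are routine adaptations of the classical Salsa/Jerison--Kenig argument to the weighted parabolic case.
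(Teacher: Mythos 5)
Your reduction to the extension problem, the use of the positivity of the Poisson-type kernel to get $U\ge0$, and the even reflection across $\{y=0\}$ (valid where $H^su=0$ supplies the vanishing conormal derivative) all match the paper. The gap is in the geometry of the set where the extended function vanishes. You assert that the reflected extension ``has no boundary condition to satisfy on $\{y=0\}$ --- only on the lateral part $\partial\Omega_0\times\R$, where $u$ (hence $U$) vanishes continuously,'' and you then invoke a boundary Harnack principle for a Lipschitz lateral boundary. But $U(t,x,y)$ for $y>0$ is a weighted average of $e^{-\tau H}u(t,x)=\int_\Omega W_\tau(x,z)u(t-\tau,z)\,dz$, which mixes in values of $u$ from all of $\Omega$, including the interior of $\Omega_0$ where $u$ is not assumed to vanish; so $U$ does \emph{not} vanish on $\partial\Omega_0\times\{y>0\}$. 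The only vanishing the hypotheses give is $\widetilde U(t,x,0)=u(t,x)=0$ for $x\in(\Omega\setminus\Omega_0)\cap B_{2r}(\tilde x)$, i.e.\ on a \emph{slit} $\big((\Omega\setminus\Omega_0)\cap B_{2r}(\tilde x)\big)\times\{0\}$ sitting inside the $(x,y)$-cylinder, with the reflected solution living on both sides of it (in $y$). A Lipschitz-cylinder boundary Harnack theorem does not apply to this configuration, and vanishing on the codimension-two edge $(\partial\Omega_0\cap B_{2r})\times\{0\}$ alone yields nothing.

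The missing ingredient is the second change of variables used in the paper (following Stinga--Torrea): after a first bi-Lipschitz map flattening $\partial\Omega_0$ inside $B_{2r}(\tilde x)$ so that the slit becomes $\{x_n\le0,\,y=0\}$, one applies the standard bi-Lipschitz ``unfolding'' map from $\R^{n+1}\setminus\{x_n\le0,\,y=0\}$ onto the half-space $\{x_n>0\}$. This converts the two-sided slit into a genuine codimension-one flat boundary on which the transformed function vanishes continuously, while preserving the class of degenerate parabolic equations with bounded measurable coefficients and $A_2$ weight. Only then can one apply the boundary Harnack inequality for such equations (the paper cites Ishige) --- or, if you prefer, run your Carleson-estimate-plus-oscillation-decay argument. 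Without the unfolding step your argument does not reach a setting in which any known boundary Harnack principle is applicable, so as written the proof does not go through.
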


The main tool to prove Theorems \ref{harn_para} and \ref{harn_bd} is an extension problem characterization
for the fractional operators $(\partial_t+L)^s$. 
Observe that, in general, $L$ as in \eqref{eq:operatorL} may have discrete or continuous spectrum in different
Hilbert spaces. The extension problem we present here  not only works for  \eqref{eq:operatorL},
but for any fractional operator of the form $(\partial_t+L)^s$, where $L$ is a 
nonnegative normal linear operator in a Hilbert space $L^2(\Omega)$ with some positive measure $d\eta$.

\begin{thm}[Extension problem]\label{th_exten}
	Let $L$ be a nonnegative normal linear operator on $L^2(\Omega)$
	and $H = \partial_t + L$. Let $u\in\Dom(H^s)$. For $(t,x)\in\R\times\Omega$ and $y>0$ we define
	\begin{equation}\label{solution}
	\begin{aligned}
	U(t,x,y) &= \frac{y^{2s}}{4^s \Gamma(s)} \int^{\infty}_0 e^{-y^2/(4t)} e^{-\tau H} u(t,x)\,\frac{d \tau}{\tau^{1+s}} \\
	&= \frac{1}{\Gamma(s)}\int_0^\infty e^{-r}e^{-\frac{y^2}{4r}H}u(t,x)\,\frac{dr}{r^{1-s}}\\
	&= \frac{1}{\Gamma(s)}\int_0^\infty e^{-y^2/(4r)}e^{-rH}(H^su)(t,x)\,\frac{dr}{r^{1-s}}.
	\end{aligned}
	\end{equation}
	Then $U(\cdot,\cdot,y) \in \Dom(H)$ for each $y>0$,
	$U\in C^\infty((0,\infty);L^2(\R \times \Omega)) \cap C([0, \infty);L^2( \R \times \Omega))$
	and $U\in L^2((0,\infty);\Dom(H),y^{1-2s}dy)$
	Moreover, $U$ is a solution to
	\begin{equation}\label{eq:localU}
	\begin{cases}
	\langle HU,v\rangle=\Big\langle\frac{1-2s}{y}\partial_yU+\partial_{yy}U,v\Big\rangle_{L^2(\R\times\Omega)}&\hbox{for each}~
	v\in\Dom(H)~\hbox{and}~y>0\\
	\displaystyle\lim_{y\to0^+}U(t,x,y)=u(t,x)&\hbox{in}~L^2(\R\times\Omega)
	\end{cases}
	\end{equation}
	such that
	$$\lim_{y \to \infty} \langle U, v \rangle_{L^2(\R \times \Omega)} = 0,\quad\hbox{for every}~v\in L^2(\R\times\Omega)$$
	and
	$$ \sup_{y>0}|\langle y^{1-2s}\partial_y U, v \rangle_{L^2(\R \times \Omega)} |\leq C_s\|u\|_{H^s}\|v\|_{H^s},\quad\hbox{for every}~v\in\Dom(H^s).$$
	In addition, for every $v \in \Dom(H^s)$, 
	\begin{align*}
	-\frac{1}{2s}\lim_{y\to 0^+}\langle y^{1-2s}\partial_y U,v\rangle_{L^2(\R\times\Omega)}
	&=\frac{|\Gamma(-s)|}{4^s\Gamma(s)}\langle H^{s}u,v\rangle \\
	&= -\lim_{y\to0^+}\Big\langle\frac{U(\cdot,\cdot,y)-U(\cdot,\cdot,0)}{y^{2s}},v\Big\rangle_{L^2(\R\times\Omega)}.
	\end{align*}
\end{thm}

Theorem \ref{th_exten} shows that the solution $u$ to the nonlocal problem $H^su=f$
is characterized by the solution $U$ to the local problem \eqref{eq:localU}. Another main
novelty is the set of explicit formulas for the solution $U$ we discovered in \eqref{solution}.
Observe that $U$ is given in terms of the semigroup
generated by $H$ acting either on $u$ or on $f=H^su$.

Extension problems as the one in Theorem \ref{th_exten} have proven to be very useful for several applications.
For example, the extension problem allows to find a monotonicity formula and prove regularity estimates for
free boundary problems for fractional powers of the heat operator \cite{ACM}.
Also, they are central tools for the numerical analysis of fractional equations using finite
elements methods \cite{NOS}.

The reader will notice that our semigroup methodology is very general and has wide applicability. Indeed,
we can also consider other master equations $(\partial_t+L)^su=f$
in different settings. For example, the elliptic operator $L$ can be the Laplace--Beltrami or the conformal Laplacian
on a manifold, a subelliptic operator on a Lie group (like a Carnot or Heisenberg group), the Laplacian
in infinite dimensions (Wiener space), the Laplacian on a lattice, or nonsymmetric operators like
nondivergence form elliptic operators and operators associated to Dirichlet forms in Hilbert spaces.
More generally, it is enough for $L$ to be the generator of a uniformly bounded $C_0$-semigroup as in \cite{Gale-Miana-Stinga}
because, in that case, the semigroup $\{e^{-\tau H}\}_{\tau\geq0}$ will be well defined.
Applications and generalizations to these cases will appear elsewhere.

In view of our results, we expect parabolic Harnack inequalities to hold
for \eqref{eq:master} in the general case of \cite{Caffarelli-Silvestre-Master}.
This remains an open problem for which a different technique
that is not based on extension problems needs to be found.

In Section \ref{trans_sec} we develop a \textit{transference method} for fractional powers of parabolic operators
that allows us to transfer the Harnack inequalities and H\"older estimates for $(\partial_t+L)^su=f$ from Theorems \ref{harn_para}
and \ref{harn_bd} to other master equations of the form $(\partial_t+\bar{L})^s\bar{u}=\bar{f}$.
Here, formally, $\bar{L}= (U\circ W)^{-1}\circ L\circ(U\circ W)$, where $U$ is a multiplication operator by a smooth positive function and $W$ is a smooth change of variables operator. This method is particularly useful when $\bar{L}$ is one of the following
elliptic operators having gradient term.
\begin{enumerate}
\item[$(7)$] The Ornstein--Uhlenbeck operator $L=-\Delta+2x\cdot\nabla$ in $\Omega=\R^n$ with the Gaussian measure.
\item[$(8)$] The Laguerre operators
\begin{itemize}
\item $L=\sum^{n}_{i=1} \left( -x_i \frac{\partial^2}{\partial x_i^2} - ( \alpha_i + 1 )\frac{\partial}{\partial x_i}
+\frac{x_i}{4} \right)$,
\item $L=\frac{1}{4}(-\Delta+|x|^2-\sum_{i=1}^n\frac{2\alpha_i+1}{x_i}\frac{\partial}{\partial_{x_i}})$,
\item $L=\sum^{n}_{i=1} \left( -x_i \frac{\partial^2}{\partial x_i^2} - \frac{\partial}{\partial x_i}+\frac{x_i}{4}
+\frac{\alpha_i^2}{4x_i} \right)$,
\item $L = \sum^{n}_{i=1} \left( -x_i \frac{\partial^2}{\partial x_i^2} - ( \alpha_i + 1 - x_i)\frac{\partial}{\partial x_i} \right)$,
\end{itemize}
for $\alpha_i>-1$ in $\Omega=(0,\infty)^n$, with their corresponding Laguerre measures.
\item[$(9)$] The ultraspherical operator $L = - \frac{d^2}{d x^2} -2 \lambda \cot x\frac{d}{dx} + \lambda^2$, for $\lambda>0$ in $\Omega=(0,\pi)$
with the measure $d\eta(x)=\sin^{2\lambda}x\,dx$.
\item[$(10)$] The Bessel operator $L = -\frac{d^2}{d x^2} -\frac{2 \lambda}{x} \frac{d}{dx}$ , for $\lambda>0$ in $\Omega=(0,\infty)$
with the measure $d\eta(x)=x^{2\lambda}dx$.
\end{enumerate}
These are related to classical orthogonal expansions and can be obtained by transference from
the operators $L$ listed in $(2)$--$(6)$. Transference techniques in the elliptic case have been widely used in harmonic analysis,
see \cite{AMST}, and also for fractional elliptic PDEs, see \cite{SZ}.
We also point out that pointwise formulas for the nonlocal operators $(\partial_t+L)^su(t,x)$
when $L$ is as in $(7)$--$(10)$
can be deduced exactly as in Theorem \ref{thm:formula} by using the corresponding heat kernels.
Details are left to the interested reader.

\begin{thm}[Transference method]\label{thm:transference}
	If Theorems \ref{harn_para} and \ref{harn_bd} hold true for solutions $u\in\Dom(H^s)$ to $(\partial_t+L)^su=0$, where $L$ is as in \eqref{eq:operatorL},
	then they also hold true for solutions $\bar{u}\in\Dom(\bar{H}^s)$ to $(\partial_t+\bar{L})^s\bar{u}=0$.
\end{thm}

A very detailed study of the master equation $(\partial_t-\Delta)^su=f$ in $\Omega=\R^n$
was carried out in \cite{Stinga-Torrea-SIAM}. There are many significant challenges in our work with respect to that one.
As we mentioned before, one notices that while defining the operator $(\partial_t - \Delta)^s$ in \cite{Stinga-Torrea-SIAM}, the authors use the
Fourier transform both the space and times variables. They are able to find explicit pointwise formulas
for $(\partial_t -\Delta)^s u(t,x)$ and also for the inverse $(\partial_t-\Delta)^{-s}f(t,x)$.
In general, our operators $L$ have nonsmooth coefficients and the domains $\Omega$ are not $\R^n$. Therefore,
the use of the Fourier transform in the space variable is not the most adequate approach.
Instead, we take advantage of the spectral resolution of $L$. As a result of our generality, we do not have any explicit expression for the kernel
in the master equation formulation, see Theorem \ref{thm:formula}, rather bounds coming from heat kernel estimates,
see Remark \ref{rem:gaussian}.
Theorem \ref{th_exten} for $(\partial_t-\Delta)^s$ was also proved in \cite{Stinga-Torrea-SIAM},
and an explicit formula for $U(t,x,y)$ was given using a Poisson kernel. In such expression for $U$, one can directly 
take derivatives inside the integral sign to prove that $U$ is a classical solution and satisfies the extension equation pointwise.
In turn, in our case, to prove that $U$ is a solution we need to establish the weak formulation, see Sections \ref{section3} and \ref{div_sec}.
In terms of regularity, \cite{Stinga-Torrea-SIAM} uses
the symmetries of the Laplacian and the heat semigroup characterizations of H\"older and Zygmund spaces
to prove parabolic Schauder estimates in a very elegant, quick way.
In contrast, in \cite{B-S}, a compactness method needed to be developed to prove interior and boundary Schauder estimates for
\eqref{eq:Hsequation}. Finally, in this paper 
we are able to transfer Harnack inequalities from one set of operators to another set of operators
because our Theorems \ref{harn_para} and \ref{harn_bd} hold for very general elliptic operators $L$.
This would not be possible by just using the results in \cite{Stinga-Torrea-SIAM}.

The paper is organized as follows.
In Section \ref{section:pointwise} we provide the precise definition of $(\partial_t+L)^s$ and
prove Theorem \ref{thm:formula}. 
Section \ref{section3} contains the proof of the general parabolic extension problem (Theorem \ref{th_exten})
and Section \ref{div_sec} explains how to apply it when $L$ is an elliptic operator in divergence form.
The proof of Theorems \ref{harn_para} and \ref{harn_bd} are given in Section \ref{proof_harn_para}.
Finally, the transference method (Theorem \ref{thm:transference}) and the proof of Harnack inequalities and
H\"older estimates for $(\partial_t+{L})^su=f$, where $L$ is as in $(7)$--$(10)$, are presented in Section \ref{trans_sec}.

\section{Definition and integro-differential formula}\label{section:pointwise}

In this section we present the precise definition of $H^su(t,x)=(\partial_t+L)^su(t,x)$ and show
that in general this is a master operator.

Let $L$ be a nonnegative normal linear operator on a Hilbert space $L^2(\Omega)$
with some positive measure $d\eta$.
For concreteness and simplicity of the presentation, we will assume that $L$ has discrete spectrum
and $d\eta$ is the Lebesgue measure. We will also assume for simplicity that the eigenfunctions of $L$ are real-valued.
We can always obtain the general result by using the Spectral Theorem,
the Fourier transform, the Hankel transform, the corresponding orthogonal expansions with respect
to $d\eta$, etc.

Therefore, suppose that $L$ has a countable sequence of eigenvalues and eigenfunctions $(\lambda_k,\phi_k)_{k\geq0}$
such that $0\leq\lambda_0<\lambda_1\leq \lambda_2\leq \cdots\nearrow\infty$ and so that $\{\phi_k\}_{k\geq0}$
forms an orthonormal basis of $L^2(\Omega)$.   In the case in which $\lambda_0=0$ (for instance, for the Neumann Laplacian) we assume that all the functions involved
have zero integral mean over $\Omega$.
 With this, any function $u(t,x)\in L^2(\R\times\Omega)$ can be written as
$$u(t,x)=\frac{1}{(2\pi)^{1/2}}\int_{\R}\sum_{k=0}^\infty\widehat{u_k}(\rho)\phi_k(x)e^{it\rho}\,d\rho,$$
where
$$u_k(t)=\int_{\Omega}u(t,x)\phi_k(x)\,dx$$
and $\widehat{u_k}(\rho)$ is the Fourier transform of $u_k(t)$ with respect to the variable $t\in\R$:
$$\widehat{u_k}(\rho)=\frac{1}{(2\pi)^{1/2}}\int_\R u_k(t)e^{-i\rho t}\,dt.$$
The domain of the operator $H^s\equiv(\partial_t+L)^s$, $0\leq s\leq1$, is defined as
$$\Dom(H^s)=\bigg\{u\in L^2(\R\times\Omega):
\|u\|_{H^s}^2:=\int_{\R}\sum_{k=0}^\infty|i\rho+\lambda_k|^s|\widehat{u_k}(\rho)|^2\,d\rho<\infty\bigg\}.$$

For $u\in\Dom(H^s)$ we define
$H^su$ as a bounded linear functional on $\Dom(H^s)$ that acts on any $v\in\Dom(H^s)$ by
\begin{equation}\label{eq:definitionHs}
\langle H^su,v\rangle\equiv\int_{\R}\sum_{k=0}^\infty(i\rho+\lambda_k)^s\widehat{u_k}(\rho)\overline{\widehat{v_k}(\rho)}\,d\rho
\end{equation}
where $\overline{\widehat{v_k}(\rho)}$ denotes the complex conjugate of $\widehat{v_k}(\rho)$. We have
$$\|u\|_{H^s}^2=\langle H^{s/2}u,H^{s/2}u\rangle\quad\hbox{for any}~0\leq s\leq1.$$
Notice that we need to appropriately decide which $s$-power of the complex number
$(i\rho+\lambda_k)$ we are taking. We are able to clarify this by developing a semigroup technique,
in which the Gamma function plays a crucial role. The method permits us to show that \eqref{eq:definitionHs}
is indeed a master equation, or nonlocal in space and time integro-differential operator, in divergence form.
Observe as well that $\Dom(H^s)$ encodes the boundary condition on $L$.

As the family of eigenfunctions $\{\phi_k\}_{k\geq0}$ is an orthonormal basis of $L^2(\Omega)$, we can write
the semigroup $\{e^{-\tau L}\}_{\tau\geq0}$ generated by $L$ as
$$\langle e^{-\tau L}\varphi,\psi\rangle_{L^2(\Omega)} =\sum_{k=0}^\infty e^{-\tau \lambda_k}\varphi_k\psi_k
=\int_\Omega\int_\Omega W_\tau (x,z)\varphi(z){{ \psi(x) }}\,dz\, dx$$ for any $\varphi,\psi\in L^2(\Omega)$, where $\displaystyle\varphi_k=\int_{\Omega}\varphi\phi_k\,dx$ and $\displaystyle\psi_k=\int_{\Omega}\psi\phi_k\,dx$.
As it happens for \eqref{eq:operatorL} and all the other cases $(1)$--$(9)$, we will always assume that the heat kernel for $L$ is
symmetric and nonnegative:
 $$W_\tau(x,z)=W_\tau(z,x)\geq0.$$

Since $\partial_t$ and $L$ commute, we define, for any $u\in L^2(\R\times\Omega)$,
$$e^{-\tau H}u(t,x)=e^{-\tau L}(e^{-\tau \partial_t} u)(t,x)=e^{-\tau L}(u(t-\tau,\cdot))(x)$$
in the sense that, for any $v\in L^2(\R\times\Omega)$, 
\begin{equation}\label{eq:semigroupL2}
\begin{aligned}
\langle e^{-\tau H}u,v\rangle_{L^2(\R\times\Omega)} &= \int_\R\sum_{k=0}^\infty e^{-\tau(i\rho+\lambda_k)}\widehat{u_k}(\rho)\overline{\widehat{v_k}(\rho)}\,d\rho \\
&=\int_\R\sum_{k=0}^\infty e^{-\tau\lambda_k}u_k(t-\tau)v_k(t)\,dt\\
&=\int_\R\int_\Omega \int_\Omega W_\tau (x,z) u(t-\tau,z){{v(t,x)}} \,dz\, dx\,dt.
\end{aligned}
\end{equation}

\begin{lem}\label{lem:semigroup}
		Let $0<s<1$. If $u\in\Dom(H^s)$ then 
	$$H^su =  \frac{1}{\Gamma(-s)} \int^{\infty}_0 \left( {e^{-\tau H} u }- {u} \right) \frac{d \tau} {\tau^{1+s}}$$
		in the sense that, for any $v\in\Dom(H^s)$,
$$ \langle H^s u , v \rangle =  \frac{1}{\Gamma(-s)} \int^{\infty}_0 \left(\langle {e^{-\tau H}u} , v \rangle_{L^2(\R\times\Omega)} - \langle {u} , {v} \rangle_{L^2{(\R\times\Omega)}} \right) \frac{d \tau}{\tau^{1+s}}.$$
	\end{lem}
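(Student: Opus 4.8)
The plan is to prove the identity at the level of the spectral/Fourier representation and then show the integral converges, so that the action on test functions $v \in \Dom(H^s)$ matches \eqref{eq:definitionHs}. First I would record the elementary complex-variable identity: for any $z$ with $\Re z \geq 0$ (here $z = i\rho + \lambda_k$, which lies in the closed right half-plane since $\lambda_k \geq 0$) and $0 < s < 1$,
\[
z^s = \frac{1}{\Gamma(-s)} \int_0^\infty \big(e^{-\tau z} - 1\big)\,\frac{d\tau}{\tau^{1+s}},
\]
with the branch of $z^s$ being the one holomorphic on the right half-plane and real-positive on the positive reals. This is the standard subordination formula; it can be justified by first taking $z > 0$ real (where it follows from $\int_0^\infty (e^{-\tau} - 1)\tau^{-1-s}\,d\tau = \Gamma(-s)$ after scaling $\tau \mapsto \tau/z$), then extending to $\Re z \geq 0$ by analytic continuation, using that both sides are holomorphic in $\{\Re z > 0\}$ and continuous up to the boundary — the integral converges absolutely near $\tau = \infty$ and, near $\tau = 0$, $|e^{-\tau z} - 1| \leq \tau|z|$ gives an integrable bound since $0 < s < 1$. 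This fixes precisely which $s$-power of $i\rho + \lambda_k$ is meant in \eqref{eq:definitionHs}.

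Next I would insert this into the definition. Using \eqref{eq:semigroupL2}, for $u, v \in \Dom(H^s)$,
\[
\langle e^{-\tau H}u, v\rangle_{L^2(\R\times\Omega)} - \langle u, v\rangle_{L^2(\R\times\Omega)}
= \int_\R \sum_{k=0}^\infty \big(e^{-\tau(i\rho+\lambda_k)} - 1\big)\widehat{u_k}(\rho)\overline{\widehat{v_k}(\rho)}\,d\rho.
\]
Dividing by $\tau^{1+s}$ and integrating in $\tau \in (0,\infty)$, I would apply Fubini–Tonelli to exchange the $\tau$-integral with the $\rho$-integral and the $k$-sum. Once that exchange is justified, the inner $\tau$-integral produces exactly $\Gamma(-s)(i\rho+\lambda_k)^s$ by the identity above, and dividing by $\Gamma(-s)$ recovers $\langle H^s u, v\rangle$ as in \eqref{eq:definitionHs}. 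This gives the asserted formula.

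The main obstacle is justifying the Fubini step, i.e. controlling $\int_0^\infty \sum_k \int_\R |e^{-\tau(i\rho+\lambda_k)} - 1|\,|\widehat{u_k}(\rho)|\,|\overline{\widehat{v_k}(\rho)}|\,d\rho\,\frac{d\tau}{\tau^{1+s}}$. The key pointwise bound is $|e^{-\tau(i\rho+\lambda_k)} - 1| \leq \min\{2, C\tau|i\rho+\lambda_k|\}$, which after integrating against $\tau^{-1-s}$ yields $\int_0^\infty |e^{-\tau(i\rho+\lambda_k)} - 1|\,\frac{d\tau}{\tau^{1+s}} \leq C_s |i\rho+\lambda_k|^s$ (split at $\tau = 1/|i\rho+\lambda_k|$). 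Then
\[
C_s\int_\R\sum_k |i\rho+\lambda_k|^s\,|\widehat{u_k}(\rho)|\,|\widehat{v_k}(\rho)|\,d\rho
\leq C_s\bigg(\int_\R\sum_k |i\rho+\lambda_k|^s|\widehat{u_k}(\rho)|^2 d\rho\bigg)^{1/2}\bigg(\int_\R\sum_k |i\rho+\lambda_k|^s|\widehat{v_k}(\rho)|^2 d\rho\bigg)^{1/2}
= C_s\|u\|_{H^s}\|v\|_{H^s} < \infty
\]
by Cauchy–Schwarz, which is finite precisely because $u, v \in \Dom(H^s)$. This absolute convergence legitimizes the interchange and simultaneously shows the integral in the statement converges, completing the argument. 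A minor point to address along the way is that the second line of \eqref{eq:semigroupL2} requires $u \in L^2(\R\times\Omega)$, which holds here, and that the first line's spectral representation is the one used for the Fubini computation.
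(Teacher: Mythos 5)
Your proposal is correct and follows essentially the same route as the paper: the subordination formula $(i\rho+\lambda_k)^s=\frac{1}{\Gamma(-s)}\int_0^\infty(e^{-\tau(i\rho+\lambda_k)}-1)\,\tau^{-1-s}d\tau$, the bound $\int_0^\infty|e^{-\tau z}-1|\tau^{-1-s}d\tau\leq C_s|z|^s$ obtained by splitting at $\tau=1/|z|$, and Fubini combined with the membership $u,v\in\Dom(H^s)$. You in fact spell out the final Cauchy--Schwarz step that the paper leaves implicit.
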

	\begin{proof}
		Let $u,v \in \Dom(H^s)$. We will use the following numerical formula with the Gamma function
		that comes from performing the analytic continuation to $\Re(z)>0$ of the function
		that maps $t\in[0,\infty)$ to $t^s$, see \cite{Bernardis, Stinga-Torrea-SIAM},
\begin{equation}\label{gamma_com}
(i\rho + \lambda_k)^s = \frac{1}{\Gamma(-s)} \int^{\infty}_0 (e^{-\tau(i\rho+ \lambda_k)}- 1) \frac{d \tau}{\tau^{1+s}},\quad\rho\in\R.
\end{equation}
The integral above is absolutely convergent. Then, in \eqref{eq:definitionHs} we have
		\begin{align*}
	\langle H^s u, v \rangle&=	\int_{\mathbb{R}}\sum_{k=0}^\infty\left[ \frac{1}{\Gamma(-s)} \int^{\infty}_0 (e^{-\tau(i\rho+ \lambda_k)}- 1)\frac{d \tau}{\tau^{1+s}} \right]
	\widehat{u_k}(\rho)\overline{\widehat{v_k}(\rho)}\, d \rho. 
		\end{align*}
		On one hand,
		$$\int_0^{1/|i\rho+\lambda_k|}|e^{-\tau(i\rho+\lambda_k)}-1|\,\frac{d\tau}{\tau^{1+s}}\leq C|i\rho+\lambda_k|\int_0^{1/|i\rho+\lambda_k|}\tau^{-s}\,d\tau
		=C|i\rho+\lambda_k|^s.$$
		On the other hand,
		$$\int_{1/|i\rho+\lambda_k|}^\infty|e^{-\tau(i\rho+\lambda_k)}-1|\,\frac{d\tau}{\tau^{1+s}}\leq
		C\int_{1/|i\rho+\lambda_k|}^\infty\tau^{-1-s}\,d\tau
		=C|i\rho+\lambda_k|^s.$$
		Since $u,v\in\Dom(H^s)$, Fubini's Theorem and \eqref{eq:semigroupL2} allow us to get the conclusion.
\end{proof}

\begin{proof}[Proof of Theorem \ref{thm:formula}]
For $u,v\in \Dom(H^s)\cap C^\infty_c(\R\times\Omega)$ we have, by Lemma \ref{lem:semigroup}, up to the multiplicative constant $1/\Gamma(-s)$,
\begin{align*}
\langle H^s&u,v\rangle =\int_0^\infty\big(\langle e^{-\tau L}u(\cdot-\tau,\cdot),v(\cdot,\cdot)\rangle_{L^2(\R\times\Omega)}-\langle u,v\rangle_{L^2(\R\times\Omega)}\big)
\,\frac{d\tau}{\tau^{1+s}}\\
&=\int_0^\infty\bigg[\int_\R\int_\Omega\int_\Omega W_\tau(x,z)u(t-\tau,z){{v(t,x)}}\,dz\,dx\,dt-\int_{\R}\int_\Omega u(t,x){{v(t,x)}}\,dx\,dt\bigg]\,\frac{d\tau}{\tau^{1+s}}.
\end{align*}
The integral in brackets can be rewritten as
\begin{equation}\label{eq:1}
\begin{aligned}
\int_\R&\int_\Omega\int_\Omega W_\tau(x,z)(u(t-\tau,z)-u(t-\tau,x)){{v(t,x)}}\,dz\,dx\,dt\\
&+\int_\R\int_\Omega\big(e^{-\tau L}1(x)u(t-\tau,x)-u(t,x)\big){{v(t,x)}}\,dx\,dt.
\end{aligned}
\end{equation}
By exchanging the roles of $x$ and $z$ and using that $W_\tau(z,x)=W_\tau(x,z)$, the integrals above are also equal to
\begin{equation}\label{eq:2}
\begin{aligned}
-\int_\R&\int_\Omega\int_\Omega W_\tau(x,z)(u(t-\tau,z)-u(t-\tau,x)){{v(t,z)}}\,dx\,dz\,dt\\
&+\int_\R\int_\Omega\big(e^{-\tau L}1(x)u(t-\tau,x)-u(t,x)\big){{v(t,x)}}\,dx\,dt.
\end{aligned}
\end{equation}
By adding \eqref{eq:1} and \eqref{eq:2}, we get that, up to the multiplicative constant $1/|\Gamma(-s)|$,
\begin{align*}
2\langle H^su,v\rangle &=\int_0^\infty\bigg[\int_\R\int_\Omega\int_\Omega W_\tau(x,z)(u(t-\tau,x)-u(t-\tau,z))({{v(t,x)-v(t,z)}})\,dz\,dx\,dt\\
&\qquad\qquad+2\int_\R\int_\Omega\big(u(t,x)-e^{-\tau L}1(x)u(t-\tau,x)\big){{v(t,x)}}\,dx\,dt\bigg]\,\frac{d\tau}{\tau^{1+s}}.
\end{align*}
For the operators $L$ in $(1)$--$(6)$
we always have the Gaussian estimate
$$|W_\tau(x,z)|\le C\frac{e^{-|x-z|^2/(c\tau)}}{\tau^{n/2}}$$
(see, for instance, \cite{AMST,Aronson,Stinga-Caffa,Davies}).
Observe that $u,v$ can be extended by zero outside of $\R\times\Omega$ so
we can regard them as functions in $C^\infty_c(\R^{n+1})$. Then
\begin{align*}
\bigg|&\int_0^\infty\int_\Omega\int_\Omega
W_\tau(x,z)\int_\R(u(t-\tau,x)-u(t-\tau,z))({{v(t,x)-v(t,z)}})\,dt\,dz\,dx\,\frac{d\tau}{\tau^{1+s}}\bigg|\\
&=\bigg|\int_0^\infty\int_\Omega\int_\Omega W_\tau(x,z)\int_\R e^{i\tau\rho}(\widehat{u}(\rho,x)-\widehat{u}(\rho,z))\overline{(\widehat{v}(\rho,x)-\widehat{v}(\rho,z))}
\,d\rho\,dz\,dx\,\frac{d\tau}{\tau^{1+s}}\bigg|\\
&\leq \int_\R\int_{\Omega}\int_\Omega|\widehat{u}(\rho,x)-\widehat{u}(\rho,z)||\widehat{v}(\rho,x)-\widehat{v}(\rho,z)|\bigg[\int_0^\infty W_\tau(x,z)\,\frac{d\tau}{\tau^{1+s}}\bigg]\,dz\,dx
\,d\rho \\
&\leq C\int_\R\int_{\R^n}\int_{\R^n}\frac{|\widehat{u}(\rho,x)-\widehat{u}(\rho,z)|^2}{|x-z|^{n+2s}}\,dz\,dx\,d\rho
+C\int_\R\int_{\R^n}\int_{\R^n}\frac{|\widehat{v}(\rho,x)-\widehat{v}(\rho,z)|^2}{|x-z|^{n+2s}}\,dz\,dx\,d\rho \\
&=C\int_{\R}\big(\|(-\Delta)^{s/2}\widehat{u}(\rho,\cdot)\|_{L^2(\R^n)}^2+\|(-\Delta)^{s/2}\widehat{v}(\rho,\cdot)\|_{L^2(\R^n)}^2\big)\,d\rho \\
&=C\int_{\R^{n+1}}|\xi|^{2s}\big(|\mathcal{F}_{\R^{n+1}}(u)(\rho,\xi)|^2+|\mathcal{F}_{\R^{n+1}}(v)(\rho,\xi)|^2\big)\,d\xi\,d\rho
\end{align*}
where in the last identity we use Plancherel's identity in $\R^n$ and $\mathcal{F}_{\R^{n+1}}$ denotes the Fourier
transform in $(t,x)\in\R^{n+1}$. The last integral above is finite because $u,v\in C^\infty_c(\R^{n+1})$.
Therefore, we can write $\langle H^su,v\rangle$ as
the sum of
$$\frac{1}{2|\Gamma(-s)|}\int_0^\infty\int_\R\int_\Omega\int_\Omega
W_\tau(x,z)(u(t-\tau,x)-u(t-\tau,z))({{v(t,x)-v(t,z)}})\,dz\,dx\,dt\,\frac{d\tau}{\tau^{1+s}}$$
and
$$\frac{1}{|\Gamma(-s)|}\int_0^\infty\int_\R\int_\Omega\big(u(t,x)-e^{-\tau L}1(x)u(t-\tau,x)\big){{v(t,x)}}\,dx\,dt\,\frac{d\tau}{\tau^{1+s}}.$$
The conclusion readily follows from here.
\end{proof}

\begin{rem}
In Theorem \ref{thm:formula} we have assumed that $u$ and $v$ are smooth with compact support. We can relax
this assumption as soon as we are able to show that for any $u,v\in\Dom(H^s)$ we have
$$\int_\R\int_{\Omega}\int_\Omega|\widehat{u}(\rho,x)-\widehat{u}(\rho,z)||\widehat{v}(\rho,x)-\widehat{v}(\rho,z)|\bigg[\int_0^\infty W_\tau(x,z)\,\frac{d\tau}{\tau^{1+s}}\bigg]\,dz\,dx
\,d\rho<\infty.$$
This is true, for instance, in the case when $L$ is as in $(1)$ with either Dirichlet or Neumann
boundary conditions, and with $c(x)=0$. Indeed,
by the results in \cite{Stinga-Caffa}, if $u,v\in\Dom(H^s)$ then it follows that $u,v\in L^2(\R;\Dom(L^s))$.
\end{rem}

\section{Proof of Theorem \ref{th_exten}}\label{section3}

We begin with an important preliminary result.

\begin{lem}\label{MainLem}
Let $0<s<1$. Denote by $K_\nu(z)$ the modified Bessel function of the second kind
and order $\nu$. For $y>0$ and $\lambda\in\C$ with $\Re(\lambda)>0$ we define
\begin{equation}\label{I}
\begin{aligned}
I_s(y,\lambda) &= \frac{2^{1-s}}{\Gamma(s)}(y\sqrt{\lambda})^s K_s(y\sqrt{\lambda}) \\
&=\frac1{\Gamma(s)}\int_0^\infty e^{-t}e^{-\frac{y^2}{4t}\lambda}\,\frac{dt}{t^{1-s}} \\
&= \frac{y^{2s}}{4^s\Gamma(s)}\int_0^\infty e^{-y^2/(4r)}e^{-r\lambda}\,\frac{dr}{r^{1+s}} \\
&= \frac{1}{\Gamma(s)}\int_0^\infty e^{-y^2/(4\tau)}e^{-\tau \lambda}\lambda^s\,\frac{d\tau}{\tau^{1-s}}.
\end{aligned}
\end{equation}
Then the integrals are absolutely convergent. Fix any $s$ and $\lambda$ as above. Then
\begin{enumerate}[$(1)$]
\item $I_s(y,\lambda)$ is a smooth function of $y\in(0,\infty)$.
\item For each $y>0$, $I_s(y,\lambda)$ satisfies the equation
\begin{equation}\label{I_s:eq}
\lambda u-\frac{1-2s}{y}\partial_yu-\partial_{yy}u=0.
\end{equation}
\item $\displaystyle\lim_{y\to0^+}I_s(y,\lambda)=1$.
\item $\displaystyle -y^{1-2s}\partial_yI_s(y,\lambda)=
\frac{\Gamma(1-s)}{4^{s-1/2}\Gamma(s)}\lambda^sI_{1-s}(y,\lambda)$.
\item The following estimates hold:
\begin{enumerate}[$(5.a)$]
\item $\displaystyle|I_s(y,\lambda)|\leq 1$.
\item There is a constant $C_s>0$ such that
$$|I_s(y,\lambda)|\leq C_s(y|\lambda|^{1/2})^{s-1/2}e^{-\cos(\arg(\lambda)/2)y|\lambda|^{1/2}}\quad\hbox{as}~y \to\infty.$$
\item There is a constant $C_s>0$ such that
$$|\lambda I_s(y,\lambda)|+\big|\tfrac{1}{y}\partial_y I_s(y,\lambda)\big|+|\partial_{yy}I_s(y,\lambda)|
\leq C_s\frac{|\lambda|^s}{y^{2-2s}}\quad\hbox{for every}~y>0.$$
\end{enumerate}
\item The function $I_s(\lambda,y)$ is the unique $C^{\infty}$ solution to \eqref{I_s:eq} such that
$$\lim_{y\to0}I_s(y,\lambda)=1,\quad\lim_{y\to\infty}I_s(y,\lambda)=0,\quad\hbox{and}
\quad y^{1-2s}\partial_yI_s(y,\lambda)\in L_y^\infty([0,\infty)).$$
\end{enumerate}
\end{lem}

\begin{proof}
It is well known that for $\nu$ arbitrary (see \cite[eq.~(5.10.25)]{Lebedev})
$$K_\nu(z) = \frac1{2} \Big(\frac{z}{2}\Big)^\nu \int_0^\infty e^{-t} e^{-z^2/4t} t^{-\nu-1} dt \quad \hbox{for}~|\arg z| < \frac{\pi}{4}.$$
As $K_\nu = K_{-\nu}$ we get the second identity in \eqref{I}. Moreover, since $\Re(\lambda)>0$, we have that 
$|e^{-\frac{y^2}{4t}\lambda}|\le 1$, so that the first integral in \eqref{I} is absolutely convergent.
The third identity follows from the change of variables
$r=y^2/(4t)$. The last one for $\lambda>0$ is obtained from the third one via the change of variables $\tau=y^2/(4r\lambda)$,
and the general case of $\mathrm{Re}(\lambda)>0$ follows from the case of $\lambda>0$ by analytic continuation.

Now $(1)$ is easy to check by differentiating under the integral sign. Indeed, since
\begin{equation}\label{eq:differentiation}
|\partial_y(y^{2s}e^{-y^2/(4 \tau)})|=\Big|\Big(2sy^{2s-1} - \frac{y^{2s+1}}{2 \tau}\Big)e^{-y^2/(4 \tau)}\Big|\leq
C_sy^{2s-1}e^{-y^2/(c\tau)},
\end{equation}
we get
$$\partial_yI_s(y,\lambda)= \int_0^\infty \partial_y\bigg(\frac{y^{2s}}{4^s\Gamma(s)}e^{-y^2/(4r)}\bigg)e^{-r\lambda}\,\frac{dr}{r^{1+s}}.$$
Similarly for higher order derivatives. For $(2)$ we can use integration by parts to get
\begin{align*}
\lambda I_s(y,\lambda) &= -\frac{y^{2s}}{4^s \Gamma(s)} \int_0^\infty e^{-y^2/(4r)} \partial_re^{-r\lambda}\,\frac{dr}{r^{1+s}}\\
&= \frac{y^{2s}}{4^s \Gamma(s)} \int_0^\infty \partial_r \Big( \frac{e^{-y^2/(4r)}}{r^{1+s}} \Big)  e^{-r\lambda}\,dr \\
&= \frac{y^{2s}}{4^s \Gamma(s)} \int_0^\infty \Big( \partial_{yy}+\frac{1-2s}{y} \partial_y\Big)
\Big( \frac{e^{-y^2/(4r)}}{r^{1+s}} \Big)  e^{-r\lambda}\,dr \\
&= \partial_{yy}I_s(y,\lambda)+\frac{1-2s}{y}\partial_yI_s(y,\lambda).
\end{align*}
The proof of $(3)$ follows readily from the second identity in \eqref{I}
and dominated convergence. By using that the Bessel function $K_\nu$ satisfies
$$\frac{\partial}{\partial z} [z^\nu K_\nu(z) ] = - z^\nu K_{\nu-1}(z)=- z^\nu K_{1-\nu}(z)$$
we immediately obtain $(4)$. Observe that $(5.a)$ is clear from the second identity in \eqref{I}.
The asymptotic estimate (see \cite[eq.~(5.11.9)]{Lebedev})
$$K_\nu(z) = C z^{-1/2} e^{-z} \big( 1+ O(|z|^{-1})\big)\quad\hbox{as}~|z|\to\infty,~|\arg z|<\pi-\delta,~\delta>0,$$
implies $(5.b)$. To prove (5.c), observe that the function
$g(t) = e^{-\frac{y^2}{4t}\mathrm{Re}(\lambda)}t^{s-1}$ has a maximum at $t = \frac{y^ 2\mathrm{Re}(\lambda)}{4(1-s)}$
which is $g_{max}=C_s\frac{\mathrm{Re}(\lambda)^{s-1}}{y^{2-2s}}$. Hence,
$$|I_s(y,\lambda)|\leq\frac{1}{\Gamma(s)}\int_0^\infty e^{-t}g(t)\,dt\leq C_s \frac{\mathrm{Re}(\lambda)^{s-1}}{y^{2-2s}}.$$
The estimate for $\frac{1}{y}\partial_yI_s(y,\lambda)$ follows from $(4)$ and $(5.a)$.
We can bound $\partial_{yy}I_s(y,\lambda)$ by using \eqref{I_s:eq} and the previous two estimates.
We see from $(5.b)$ that $I_s(y,\lambda) \to 0$ as $y \to \infty$.  To prove $(6)$, let $J(y)$ be a smooth solution to
\eqref{I_s:eq} such that $\lim_{y\to 0^+}J(y) = 0$, $\lim_{y\to \infty} J(y) = 0$ and
$|y^{1-2s}\partial_yJ(y)|\leq C$ for all $y\geq0$.
Multiply \eqref{I_s:eq} by $y^{1-2s}\overline{J(y)}$ and integrate by parts to get
$$\int^{\infty}_0 y^{1-2s}\mathrm{Re}(\lambda)|J(y)|^2\,dy + \int^{\infty}_0 y^{1-2s} |\partial_yJ(y) |^2\,dy= 0.$$
Since $\mathrm{Re}(\lambda)>0$, it follows that $J(y)\equiv0$.
\end{proof}

\begin{rem}
The fact that Bessel functions can be used to treat extension problems was first observed in
\cite{Stinga-Torrea-CPDE}. In here we have extended \cite{Stinga-Torrea-CPDE} to apply to the case
when $\lambda$ is complex-valued. See also \cite{Stinga-Torrea-SIAM} for solutions to the extension problem
in terms of integral representations of Bessel functions for the particular case of
$(\partial_t-\Delta)^s$, in which $\lambda=i\rho+|\xi|^2$.
\end{rem}

For the sake of simplicity and concreteness of the presentation we next 
assume that $L$ is a nonnegative, normal linear operator in $L^2(\Omega)$,
with countable eigenvalues and real eigenfunctions and with a nonnegative, symmetric heat kernel, as in Section \ref{section:pointwise}.
Recall that if the first eigenvalue is $\lambda_0=0$ (as in the Neumann Laplacian)
then we assume that all the functions involved have zero spatial mean.
The general case follows by using the Spectral Theorem or the spectral
resolution of the corresponding operator (like the Fourier transform or the Hankel transform). Details are left to the interested reader.

\begin{proof}[Proof of Theorem \ref{th_exten}]
Let us denote $U(y)=U(\cdot,\cdot,y)$, for $y>0$, where $U$ is given by \eqref{solution}.
Since
\begin{equation}\label{eq:value1}
\frac{y^{2s}}{4^s\Gamma(s)}\int_0^\infty e^{-y^2/(4\tau)}\,\frac{d\tau}{\tau^{1+s}}=1
\end{equation}
we find that, for any 
$v=v(t,x)\in L^2(\mathbb{R}\times \Omega)$,
\begin{align*}
     \big|\langle U(y), v \rangle_{L^2(\R\times \Omega)} \big| &\leq \frac{y^{2s}}{4^s \Gamma(s)}
     \int^{\infty}_0 e^{-y^2/(4 \tau)} \|e^{- \tau H } u\|_{L^2(\R \times \Omega)} \norm{v}_{L^2(\R \times \Omega)}\, \frac{d \tau}{\tau^{1+s}} \\
     &\leq \norm{u}_{L^2(\R \times \Omega)}\norm{v}_{L^2(\R \times \Omega)}
\end{align*}
so that
\begin{equation}\label{eq:Uv}
\langle U(y), v \rangle_{L^2(\R\times \Omega)} = \frac{y^{2s}}{4^s \Gamma(s)} \int^{\infty}_0 e^{-y^2/(4 \tau)}
\langle e^{- \tau H } u, v \rangle_{L^2(\R \times \Omega)}\, \frac{d \tau}{\tau^{1+s}}<\infty.
\end{equation}
In particular, for each $y>0$, $U(y) \in L^2(\R \times \Omega)$, with
$$\|U(y)\|_{L^2(\R \times \Omega)} \leq \|u\|_{L^2(\R \times \Omega)}.$$
In addition, by using \eqref{eq:semigroupL2} and \eqref{I} from Lemma \ref{MainLem},
$$\langle U(y), v \rangle_{L^2(\R\times \Omega)}
 = \int_\R\sum_{k=0}^\infty\widehat{u_k}(\rho) \overline{\widehat{v_k}(\rho)} I_s(y, i \rho+ \lambda_k)\,d \rho$$
 and
$$U(y)=\frac{1}{(2\pi)^{1/2}}\int_\R\sum_{k=0}^\infty\widehat{u_k}(\rho)I_s(y, i \rho+ \lambda_k)\phi_k(x)e^{i\rho t}\,d \rho.$$

Next, by using Lemma \ref{MainLem} parts $(5.a)$ and $(5.c)$,
$$\int_\R\sum_{k=0}^\infty|i\rho+\lambda_k||\widehat{u_k}(\rho)|^2|I_s(y, i \rho+ \lambda_k)|^2\,d \rho\leq
\frac{C_s}{y^{2-2s}}\int_\R\sum_{k=0}^\infty|i\rho+\lambda_k|^s|\widehat{u_k}(\rho)|^2\,d \rho<\infty,$$
we get that $U(y)\in\Dom(H)$ for each $y>0$.
Then, for any $v\in\Dom(H)$, (see \eqref{eq:definitionHs})
$$\langle H U(y), v \rangle = \int_\R\sum_{k=0}^\infty
\widehat{u_k}(\rho) \overline{\widehat{v_k}(\rho)}(i \rho + \lambda_k) I_s(y, i \rho + \lambda_k)\,d \rho.$$

Let us check that $U \in C^\infty((0, \infty);L^2(\R\times\Omega))$ and that, for any $k\geq1$,
$$\partial_y^k\langle U(y),v\rangle_{L^2(\R\times\Omega)}=\langle\partial_y^k U(y),v\rangle_{L^2(\R\times\Omega)}.$$
Indeed, first notice that
\begin{equation}\label{eq:semigroupi}
|\langle e^{-\tau H}u, v \rangle_{L^2(\R \times \Omega)}| \leq e^{-\tau \lambda_i} \norm{u}_{L^2(\R \times \Omega)} \norm{v}_{L^2(\R \times \Omega)}
\end{equation}
where $i=0$ if $\lambda_0\neq0$ and $i=1$ if $\lambda_0=0$.
Here we have used that
$$\|e^{-\tau H}u\|_{L^2(\R\times\Omega)}^2=\sum_{k=i}^\infty e^{-2\tau\lambda_k}\int_\R|u_k(t-\tau)|^2\,dt
\leq e^{-2\tau\lambda_i}\|u\|_{L^2(\R\times\Omega)}^2.$$
By using \eqref{eq:differentiation},
\begin{multline*}
\int^{\infty}_0\bigg|\partial_y\bigg(\frac{y^{2s}}{4^s \Gamma(s)}e^{-y^2/(4 \tau)}\bigg)\langle
e^{-\tau H} u,v\rangle_{L^2(\R\times\Omega)}\bigg|\,
\frac{d \tau}{\tau^{1+s}} \\
\leq C_sy^{2s-1}\norm{u}_{L^2(\R \times \Omega)} \norm{v}_{L^2(\R \times \Omega)}
\int_0^\infty e^{-\tau\lambda_i}e^{-y^2/(c\tau)}\,\frac{d\tau}{\tau^{1+s}}
\end{multline*}
so we can differentiate under the integral sign in \eqref{eq:Uv}. Similarly it can be done
for higher order derivatives and we get $U(y)\in C^\infty((0,\infty);L^2(\R\times\Omega))$.

Observe that, by the first equation in \eqref{I},
\begin{align*}
\int^{\infty}_0 y^{1-2s}\|U\|_{H^1}^2\, dy & = \int^{\infty}_0 y^{1-2s} \int_{\R} \sum_{k=0}^\infty |i \rho + \lambda_k||\widehat{u}_k(\rho)|^2 |I_s(y, i \rho + \lambda_k) |^2 \, d \rho \, dy  \\
& =\int_{\R} \sum_{k=0}^\infty |i \rho + \lambda_k||\widehat{u}_k(\rho)|^2 \int^{\infty}_0 y^{1-2s} |I_s(y, i \rho + \lambda_k) |^2 \, dy \, d \rho\\
&\leq C_s\int_{\R} \sum_{k=0}^\infty |i \rho + \lambda_k|^{1+s}|\widehat{u}_k(\rho)|^2 \int^{\infty}_0 y
|K_s(y\sqrt{i \rho + \lambda_k})|^2 \, dy \, d \rho. 
\end{align*}
To estimate the integral in $dy$, let $r = y |\sqrt{i \rho + \lambda_k}|$ and $\theta = \arg(\sqrt{i \rho + \lambda_k})$, hence
\begin{align}\nonumber
\int^{\infty}_0 y|K_s(y\sqrt{i \rho + \lambda_k})|^2 \, dy=|i \rho + \lambda_k|^{-1} \int^{\infty}_0 r  |K_s(re^{i \theta})|^2 dr\leq
C_s|i\rho+\lambda_k|^{-1},
\end{align}
In the last inequality we used the fact that
\begin{equation}\label{eq:Ksasymptotic}
K_s(z)\sim C_sz^{-s}\quad\hbox{as}~z\to0,\quad\hbox{and}\quad K_s(z)\sim~z^{-1/2}e^{-z}\quad\hbox{as}~z\to\infty,
\end{equation}
see \cite{Lebedev}. Then,
$$ \int^{\infty}_0 y^{1-2s} \|U\|^2\,dy \leq  C_s\int_{\R} \sum_{k=0}^\infty |i \rho + \lambda_k|^s |\widehat{u}_k(\rho)|^2 \, d \rho= C_s\norm{u}^2_{H^s}<\infty$$
so $U\in L^2((0,\infty);\Dom(H),y^{1-2s}dy)$.

For $v\in \Dom(H)$, by Lemma \ref{MainLem}, we have that
\begin{align*}
 \langle HU(y),v \rangle &= \int_\R\sum_{k=0}^\infty
 \widehat{u_k}(\rho)\overline{\widehat{v_k}(\rho)}(i \rho + \lambda_k)I_s(y, i \rho + \lambda_k)\,d\rho \\
&= \int_\R\sum_{k=0}^\infty\widehat{u_k}(\rho) \overline{\widehat{v_k}(\rho)} \left( \tfrac{1-2s}{y} \partial_y + \partial_{yy} \right)
I_s(y, i \rho + \lambda_k)\,d\rho \\ 
&= \left \langle \left( \tfrac{1-2s}{y} \partial_y + \partial_{yy} \right) U(y), v \right \rangle_{L^2(\R\times\Omega)}.
\end{align*}

By Lemma \ref{MainLem} and Dominated Convergence Theorem,
$$\lim_{y\to 0}\langle U(y), v \rangle_{L^2(\R\times\Omega)}
=\int_\R\sum_{k=0}^\infty\widehat{u_k}(\rho)\overline{\widehat{v_k}(\rho)}\,d\rho
=\langle u,v \rangle_{L^2(\R\times\Omega)}$$
and
\begin{equation}\label{eq:partialyUv}
\begin{aligned}
\langle -y^{1-2s} \partial_y U(y), v \rangle_{L^2(\R\times\Omega)} &= \frac{\Gamma(1-s)}{4^{s-1/2}\Gamma(s)}\int_\R\sum_{k=0}^\infty
(i \rho + \lambda_k)^s\widehat{u_k}(\rho)\overline{\widehat{v_k}(\rho)}I_{1-s}(y, i \rho + \lambda_k)\,d \rho \\
&\to\frac{\Gamma(1-s)}{4^{s-1/2}\Gamma(s)}\langle H^su,v\rangle,\quad\hbox{as}~y\to0^+.
\end{aligned}
\end{equation}

Now, for every $v \in \Dom(H^s)$, since $I_s(0, i \rho+\lambda_k)=1$,
$$\frac{1}{y^{2s}} \langle U(y)-U(0), v\rangle_{L^2(\R \times \Omega)}
= \int_{\R} \sum_{k=0}^\infty\widehat{u_k}(\rho) \overline{\widehat{v_k}(\rho)} \frac{I_s(y, i \rho+\lambda_k)- 1}{y^{2s}} \, d \rho.$$
From the third equation in \eqref{I}, \eqref{eq:value1} and \eqref{gamma_com} we get
\begin{align*}
\frac{I_s(y, i \rho+\lambda_k)-1}{y^{2s}}&=\frac{1}{4^s\Gamma(s)}\int_0^\infty e^{-y^2/(4\tau)}\big(e^{-\tau(i\rho+\lambda_k)}-1\big)
\,\frac{d\tau}{\tau^{1+s}} \\
&\quad\to\frac{\Gamma(-s)}{4^s\Gamma(s)}(i\rho+\lambda_k)^s,\quad\hbox{as}~y\to0^+.
\end{align*}
Moreover, by applying Lemma \ref{MainLem}$(4)$ and $(5.a)$,
\begin{align*}
\frac{|I_s(y, i \rho+\lambda_k)-1|}{y^{2s}}&\leq \frac{1}{y^{2s}}\int_0^y |\partial_r I_s(r, i \rho + \lambda_k)|\,dr \\
&\leq  \frac{C_s}{y^{2s}}|i \rho + \lambda_k|^s\int^y_0 r^{2s-1}\, dr = C_s|i \rho + \lambda_k|^s.
\end{align*}
Thus, as $u,v \in \Dom(H^s)$, by Dominated Convergence Theorem,
\begin{align*}
\lim_{y\to0^+}\frac{1}{y^{2s}} \langle U(y)-U(0), v\rangle_{L^2(\R \times \Omega)} &=
\frac{\Gamma(-s)}{4^s\Gamma(s)}\int_{\R} \sum_{k=0}^\infty(i\rho + \lambda_k)^s\widehat{u_k}(\rho) \overline{\widehat{v_k}(\rho)} \, d \rho \\ 
&=\frac{\Gamma(-s)}{4^s\Gamma(s)}\langle H^s u, v \rangle.
\end{align*}

For any $v\in L^2(\R\times\Omega)$, by \eqref{eq:semigroupi} and Lemma \ref{MainLem}, we have
\begin{equation}\label{eq:lambdai}
\begin{aligned}
|\langle U(y),v\rangle_{L^2(\R\times\Omega)}| &\leq \|u\|_{L^2(\R\times\Omega)}\|v\|_{L^2(\R\times\Omega)}
\frac{y^{2s}}{4^s \Gamma(s)}\int^{\infty}_0 e^{-\tau \lambda_i}e^{- \frac{y^2}{4 \tau}}\,\frac{d \tau}{\tau^{1+s}} \\
&= \|u\|_{L^2(\R\times\Omega)}\|v\|_{L^2(\R\times\Omega)}I_s(y,\lambda_i),
\end{aligned}
\end{equation}
where $i=0$ if $\lambda_0\neq0$ and $i=1$ if $\lambda_0=0$. 
Since $I_s(y, \lambda_i) \to 0$ as $y \to \infty$, we get that $U$ weakly vanishes as $y\to\infty$.

If $v\in\Dom(H^s)$ then we see from Lemma \ref{MainLem}$(5.a)$ and \eqref{eq:partialyUv} that
$$|\langle y^{1-2s}\partial_y U,v \rangle_{L^2(\R \times \Omega)}|\leq C_s\|u\|_{H^s}\|v\|_{H^s},\quad
\hbox{for all}~y\geq0.$$
 \end{proof}
 
\section{Extension problem for parabolic operators in divergence form}\label{div_sec}

In this section we specialize the extension characterization for $(\partial_t+L)^s$ in Theorem \ref{th_exten}
to the case when $L$ is a divergence form elliptic operator.

Let $\Omega\subset\R^n$ be a (possibly unbounded) domain and
$$Lu=-\dvie(a(x) \nabla u ) + c(x) u\quad\hbox{in}~\Omega,$$
where $a(x)=(a^{ij}(x))$ is a bounded, measurable, symmetric matrix defined in $\Omega$, satisfying
the uniform ellipticity condition, that is, for some $\Lambda>0$
$$\Lambda^{-1}|\xi|^2\leq a^{ij}(x)\xi_i\xi_j\leq\Lambda|\xi|^2$$
for a.e. $x\in\Omega$, for all $\xi=(\xi_i)_{i=1}^n\in\R^n$, and $c(x) \in L^{\infty}_{\mathrm{loc}}(\Omega)$.
Let $f\in L^2(\Omega)$. For $u \in L^2(\Omega)$, $Lu = f$ in $\Omega$ in the weak sense means that
$ \nabla u \in L^2(\Omega)$, $c^{1/2} u \in L^2(\Omega)$ and 
$$\int_{\Omega} a(x) \nabla u \nabla v\,dx + \int_{\Omega} c(x) u v\,dx = \int_{\Omega} fv\,dx,$$
for every $v\in C^\infty_c(\Omega)$.
For the sake of concreteness, we assume that, under appropriate boundary conditions on $\partial\Omega$,
$L$ has a countable family of nonnegative
eigenvalues and real eigenfunctions $(\lambda_k,\phi_k)_{k=0}^\infty$
such that the set $\{\phi_k\}_{k=0}^\infty$ forms an orthonormal basis for $L^2(\Omega)$.
For more general cases when the spectrum is continuous, see Remark \ref{rem:continuousspectrumextension}.
As before, if the first eigenvalue $\lambda_0=0$ then we assume that all the functions
involved have zero spatial mean. In particular,
$$L \phi_k = \lambda_k \phi_k\quad\hbox{for all}~k\geq0~\hbox{in the weak sense}.$$  
Therefore, if we define
$$H^1_{L}(\Omega)\equiv \Dom(L)=\Big\{u\in L^2(\Omega):\sum_{k=0}^\infty\lambda_k|u_k|^2<\infty\Big\}$$
where $\displaystyle u_k=\int_{\Omega}u\phi_k\,dx$, then, for any $u,v\in H^1_L(\Omega)$,
$$\int_{\Omega} a(x) \nabla u \nabla v\,dx + \int_{\Omega} c(x) u v\,dx=\sum_{k=0}^\infty\lambda_k u_kv_k.$$
The operators listed in $(1)$--$(4)$ in the Introduction satisfy the conditions above.

Now, the extension equation takes the form 
$$\partial_t U=y^{-(1-2s)}\dvie_{x,y}(y^{1-2s}B(x)\nabla_{x,y}U)-c(x)U,$$
where
$$B(x)=
\begin{bmatrix}
a(x) & 0 \\
0 & 1
\end{bmatrix}$$
is also uniformly elliptic. Let us denote $D=\{(x,y):x\in\Omega,~y>0\}\subset\R^{n+1}$.
The weight $\omega(x,y)=|y|^{1-2s}$ belongs to the Muckenhoupt class $A_2(\R^{n+1})$.
Define $H^1_{L,y}(D)$ as the set of functions $w=w(x,y)\in L^2(D,y^{1-2s}dxdy)$ such that 
\begin{align*}
[w]_{H^1_{L,y}(D)}^2 &:= \int^{\infty}_0\int_\Omega y^{1-2s}\big(a(x)\nabla w\nabla w+c(x)w^2\big)
\,dx\,dy + \int^{\infty}_0 \int_{\Omega} y^{1-2s}
|\partial_yw|^2\,dx\,dy\\
&=\int^{\infty}_0y^{1-2s}\sum_{k=0}^\infty\lambda_k|w_k(y)|^2\,dy + \int^{\infty}_0 \int_{\Omega} y^{1-2s}
|\partial_yw|^2\,dx\,dy< \infty,
\end{align*}
where $\displaystyle w_k(y)=\int_{\Omega}w(x,y)\phi_k(x)\,dx$, under the norm
$$\|w\|_{H^1_{L,y}(D)}^2=\|w\|_{L^2(D,y^{1-2s}dxdy)}^2+[w]_{H^1_{L,y}(D)}^2.$$

\begin{thm}\label{exten_divergence}
Consider the extension problem in Theorem \ref{th_exten} with $L$ is as above.
Then $U$, defined in \eqref{solution},  belongs to $L^2(\R;H^1_{L,y}(D))\cap C^\infty((0,\infty);L^2(\R\times\Omega))\cap C([0,\infty);L^2(\R \times \Omega))$ and
for any fixed $y>0$ and $v\in C^\infty_c(\R\times\Omega)$,
$$\langle HU,v\rangle=\int_\R\int_\Omega\Big(\tfrac{1-2s}{y}\partial_y+\partial_{yy}\Big)Uv\,dt \,dx
=y^{2s-1}\int_\R\int_\Omega \partial_y(y^{1-2s}\partial_yU)v\,dt\,dx.$$
In particular, $U$ is a weak solution to the parabolic extension problem
$$\begin{cases}
\partial_t U=y^{-(1-2s)}\dvie_{x,y}(y^{1-2s}B(x)\nabla_{x,y}U)-c(x)U&\hbox{for}~(t,x,y)\in\R\times\Omega\times(0,\infty)\\
-y^{1-2s}\partial_yU\Big|_{y=0^+}=\frac{\Gamma(1-s)}{4^{s-1/2}\Gamma(s)}H^su&\hbox{for}~(t,x)\in\R\times\Omega
\end{cases}$$
in the following sense: for any $V(t,x,y) \in C^{\infty}_c (\R\times\Omega\times[0, \infty) )$,
\begin{equation}\label{eq:Ubeforeintegration}
\begin{aligned}
\int_\R\int_\Omega U\partial_t V\,dx\,dt &= \int_\R\int_\Omega \big(a(x)\nabla_xU\nabla_xV+c(x)UV\big)\,dx \,dt \\
&\quad-\int_\R\int_\Omega\Big(\tfrac{1-2s}{y}\partial_y+\partial_{yy}\Big)UV \,dx\,dt
\end{aligned}
\end{equation}
and
\begin{align*}
\int_0^\infty\int_\R\int_\Omega y^{1-2s}U\partial_tV\,dx \,dt \,dy 
&=\int_0^\infty\int_\R\int_\Omega y^{1-2s}\big(B(x)\nabla_{x,y}U\nabla_{x,y}V+c(x)UV\big)\,dx\,dt \,dy \\
&\quad-\frac{\Gamma(1-s)}{4^{s-1/2}\Gamma(s)}\langle H^su, V(t,x,0) \rangle.
\end{align*}
\end{thm}

\begin{proof}
We have already proved in the general extension result, Theorem \ref{th_exten},
that $U(\cdot,\cdot,y)\in C^\infty((0,\infty);L^2(\R\times\Omega))\cap C([0,\infty);L^2(\R \times \Omega))$.

Let us next check that $U(t,x,y)\in L^2(\R;H^1_{L,y}(D))$. We found in \eqref{eq:lambdai} that
$$\|U(y)\|_{L^2(\R\times\Omega)} \leq \|u\|_{L^2(\R\times\Omega)}I_s(y,\lambda_i)$$
where $i=0$ if $\lambda_0\neq0$ and $i=1$ if $\lambda_0=0$. Then, from \eqref{I},
\begin{equation}\label{eq:computationwithKs}
\begin{aligned}
\int^{\infty}_0y^{1-2s}\|U(y)\|^2_{L^2(\R\times\Omega)}\,dy &\le C_s\|u\|^2_{L^2(\R\times\Omega)}
\int^{\infty}_0y^{1-2s}(y\sqrt{\lambda_i})^{2s}K^2_s(y\sqrt{\lambda_i})\,dy \\
&= C_s\|u\|^2_{L^2(\R\times\Omega)}\lambda_i^{s-1}\int^{\infty}_0 rK^2_s(r) dr<\infty.
\end{aligned}
\end{equation}
In the last inequality we used \eqref{eq:Ksasymptotic}. We are left to show that
$$\int_\R \int_0^\infty y^{1-2s}\sum_{k=0}^\infty \lambda_k |U_k(t,y)|^2\,dy\,dt + \int_\R \int_0^\infty\int_\Omega y^{1-2s}
|\partial_y U(t,x,y)|^2  \,dx \,dy\,dt < \infty,$$
 where, for any $k\geq i$, for $i=0$ if $\lambda_0\neq0$ and $i=1$ if $\lambda_0=0$,
\begin{align*}
U_k(t,y) &= \langle U(t,\cdot,y),\phi_k(\cdot)\rangle_{L^2(\Omega)} \\
&= \frac{y^{2s}}{4^s \Gamma(s)} \int^{\infty}_0 e^{-y^2/(4 \tau)}\langle e^{-\tau L}u(t-\tau,\cdot),\phi_k(\cdot)\rangle_{L^2(\Omega)}
\,\frac{d\tau}{\tau^{1+s}} \\
&= \frac{y^{2s}}{4^s \Gamma(s)} \int^{\infty}_0 e^{-y^2/(4 \tau)}e^{-\tau\lambda_k}u_k(t-\tau)\,\frac{d \tau}{\tau^{1+s}}.
\end{align*}
From here and \eqref{I} we see that
$$\int_\R |U_k(t,y)|^2\,dt \leq  \|u_k\|^2_{L^2(\R)}|I_s(y, \lambda_k)|^2.$$
Therefore, as done in \eqref{eq:computationwithKs}, 
\begin{align*}
\int_\R \int_0^\infty y^{1-2s}\sum_{k=0}^\infty \lambda_k |U_k(t,y)|^2\,dy \,dt & \leq  \sum_{k=0}^\infty
\lambda_k\|u_k\|^2_{L^2(\R)}\int_0^\infty y^{1-2s} (y\sqrt{\lambda_k})^{2s}K^2_s(y\sqrt{\lambda_k})\,dy \\
&\le C_s\sum_{k=0}^\infty \lambda^s_k\|u_k\|^2_{L^2(\R)} < \infty.
\end{align*} 
Next, observe that
$$\partial_y U(t,x,y) =C_sy^{2s-1}\sum_{k=0}^\infty\bigg[\int_{\R}
\widehat{u_k}(\rho)(i \rho+ \lambda_k)^s  I_{1-s}(y, i\rho+ \lambda_k) e^{i \rho t} d \rho \bigg] \phi_k(x)$$
and then
$$\|\partial_y U\|^2_{L^2(\R\times\Omega)}
= C_sy^{2s}\sum_{k=0}^\infty\int_{\R} |\widehat{u_k}(\rho)|^2|i \rho + \lambda_k|^{1+s}|K_{1-s}(y \sqrt{i \rho + \lambda_k})|^2\,d \rho. $$
Hence,
\begin{multline*}
\int_0^\infty y^{1-2s}\|\partial_y U\|^2_{L^2(\R\times\Omega)}\,dy\\
=C_s\sum_{k=0}^\infty \int_\R |\widehat{u_k}(\rho)|^2 |i \rho + \lambda_k|^{1+s} \int_0^\infty y|K_{1-s}(y \sqrt{i \rho + \lambda_k})|^2\,dy\,d\rho.
\end{multline*}
To estimate the integral in $dy$, we write $r = y |\sqrt{i \rho + \lambda_k}|$ and $\theta=\arg \left( \sqrt{i \rho + \lambda_k}  \right)$ to get
$$\int^{\infty}_0 y |K_{1-s}(y \sqrt{i \rho + \lambda_k})|^2\,dy=
\frac{1}{|i \rho + \lambda_k|} \int^{\infty}_0 r|K_{1-s}( r e^{i \theta})|^2\,dr\leq\frac{C_s}{|i\rho+\lambda_k|},$$
because of \eqref{eq:Ksasymptotic}. Whence,
$$\int_0^\infty y^{1-2s}\|\partial_y U\|^2_{L^2(\R\times\Omega)}\,dy\leq C_s
\sum_{k=0}^\infty\int_\R |\widehat{u_k}(\rho)|^2 |i \rho + \lambda_k|^{s} d\rho < \infty.$$
Thus $U(t,x,y)\in L^2(\R;H^1_{L,y}(D))$, as desired.

Let $V\in C^{\infty}_c(\R\times\Omega\times[0, \infty) )$. The action of $\partial_tU$ on $V$ is given by
$$\partial_tU(V)=-\int_{\R}U\partial_tVdt$$
for a.e.~$(x,y)\in\Omega\times[0,\infty)$. 
For a fixed $y$, we already know that
$$\langle HU,V\rangle=\int_\R\int_\Omega\Big(\tfrac{1-2s}{y}\partial_y+\partial_{yy}\Big)UV\,dt \,dx
=y^{2s-1}\int_\R\int_\Omega \partial_y(y^{1-2s}\partial_yU)V\,dt\,dx.$$
But now,
\begin{align*}
\langle HU,V\rangle &= - \int_{\R} \sum_{k=0}^\infty \widehat{u_k}(\rho) I_s(y, i \rho + \lambda_k)\overline{i \rho \widehat{V_k}(\rho,y)} \,d \rho \\
&\quad+ \int_{\R} \sum_{k=0}^\infty \lambda_k\widehat{u_k}(\rho)I_s(y, i \rho + \lambda_k) \overline{\widehat{V_k}(\rho,y)}  \,d \rho \\
&= - \int_{\R} \sum_{k=0}^\infty \widehat{u_k}(\rho) I_s(y, i \rho + \lambda_k)\overline{\widehat{\partial_tV_k}(\rho,y)}\, d \rho\\
&\quad+ \int_{\R} \sum_{k=0}^\infty \lambda_k\widehat{u_k}(\rho)I_s(y, i \rho + \lambda_k) \overline{\widehat{V_k}(\rho,y)}  \,d \rho \\
&= - \int_\R\int_\Omega U\partial_t V\,dx\,dt+\int_\R\int_\Omega \big(a(x)\nabla_xU\nabla_xV+c(x)UV\big)\,dx \,dt.
\end{align*}
Thus, \eqref{eq:Ubeforeintegration} follows.

Let us multiply \eqref{eq:Ubeforeintegration} by $y^{1-2s}$ and integrate in $dy$ to obtain
\begin{align*}
\int_0^\infty\int_\R\int_\Omega y^{1-2s}\partial_tU(V)\,dx\,dt\,dy &=
-\int_0^\infty\int_\R\int_\Omega y^{1-2s}\big(a(x)\nabla_xU\nabla_xV +c(x)UV\big)\,dx \,dt \,dy \\ 
&\quad+\int_0^\infty \int_\R\int_\Omega y^{1-2s}\Big(\tfrac{1-2s}{y} \partial_y + \partial_{yy} \Big)UV\,dx  \,dt\,dy.
\end{align*}
Let $0<a<b<\infty$. Since $U\in C^\infty((0,\infty);L^2(\R\times\Omega))$ we can apply Fubini's Theorem
and integration by parts to get
\begin{align*}
\int_a^b\int_\R&\int_\Omega y^{1-2s}\Big(\tfrac{1-2s}{y}\partial_y+\partial_{yy}\Big)UV\,dx\,dt\,dy \\
&= \int_\R\int_\Omega\int_a^b\partial_y(y^{1-2s}\partial_yU)V\,dy\,dt\,dx \\
&= -\int_a^b \int_\R\int_\Omega y^{1-2s}\partial_y U \partial_y V \,dy \,dx \,dt +
 \int_{\R} \int_{\Omega} y^{1-2s}\partial_yUV\,dx \,dt\big|_{y=a}^{y=b}.
\end{align*}
By letting $a\to0$ and $b\to\infty$, we have
\begin{align*}
\int_0^\infty\int_\R&\int_\Omega y^{1-2s}\Big(\tfrac{1-2s}{y}\partial_y+\partial_{yy}\Big)UV\,dx\,dt\,dy \\
&= -\int_0^\infty\int_\R\int_\Omega y^{1-2s}\partial_y U \partial_y V \,dy \,dx \,dt -
 \lim_{y\to0^+}\int_{\R} \int_{\Omega} y^{1-2s}\partial_yUV\,dx \,dt.
\end{align*}
To conclude,
\begin{align*}
\lim_{y \rightarrow 0}
\int_{\R} \int_{\Omega}\big(y^{1-2s}\partial_yUV\big)\,dx \,dt
&= \lim_{y \rightarrow 0} \int_{\R} \int_{\Omega}y^{1-2s}\partial_y U\big(V(t,x,y)-V(t,x,0)\big)\,dx \,dt \\
&\quad+ \lim_{y \rightarrow 0} \int_{\R} \int_{\Omega}y^{1-2s}\partial_yUV(t,x,0)\,dx \,dt \\
&= 0-\frac{\Gamma(1-s)}{4^{s-1/2}\Gamma(s)}\langle H^su,V(\cdot,\cdot,0)\rangle,
\end{align*}
where for the last identity we have used \eqref{eq:partialyUv}, the fact that
$V\in C^\infty_c(\R\times\Omega\times[0,\infty))$ and Dominated Convergence Theorem. Indeed,
\begin{align*}
\bigg|&\int_{\R}\int_{\Omega}y^{1-2s}\partial_y U\big(V(t,x,y)-V(t,x,0)\big)\,dx \,dt\bigg|^2
\leq C_s\|u\|_{H^s}^2\|V(\cdot,\cdot,y)-V(\cdot,\cdot,0)\|_{H^s}^2 \\
&\leq C_s\|u\|_{H^s}^2\|V(\cdot,\cdot,y)-V(\cdot,\cdot,0)\|_{H^1}^2 \\
&\leq C_{s,\Lambda}\|u\|_{H^s}^2\bigg\{\|V(\cdot,\cdot,y)-V(\cdot,\cdot,0)\|_{L^2(\R\times\Omega)}^2+\int_{\R}\int_\Omega|\partial_t(V(t,x,y)-V(t,x,0))|^2\,dx\,dt\\&
+\int_{\R}\int_\Omega|\nabla_x(V(t,x,y)-V(t,x,0))|^2\,dx\,dt+\int_{\R}\int_\Omega|c(x)||V(t,x,y)-V(t,x,0)|^2\,dx\,dt\bigg\} \\
&\to 0\quad\hbox{as}~y\to0.
\end{align*}
\end{proof}

\begin{lem}[Reflection extension]\label{reflexionext}
Let $L$ and $U$ be as in Theorem \ref{exten_divergence}.
Let $\Omega_0 \subset \Omega$ be a bounded domain and
$(T_0,T_1)\subset\R$. Suppose that
$$\lim_{y\to 0^+}\langle y^{1-2s}\partial_yU,V\rangle_{L^2(\R\times\Omega)}=0$$
for all $V\in C^\infty_c((T_0,T_1)\times\Omega_0\times[0,\infty))$. 
Fix $Y_0>0$. 
Then, the even extension $\widetilde{U}$ of $U$ in the variable $y$, defined by
\begin{equation}
\label{reflex}
\widetilde{U}(t,x,y) =\begin{cases}
U(t,x,y)&\hbox{for}~0\le y< Y_0 \\
U(t,x,-y)&\hbox{for}~-Y_0<y<0
\end{cases}\end{equation}
is a weak solution to the degenerate parabolic equation
\begin{equation}\label{eq:reflec_exten}
\partial_t\widetilde{U}=|y|^{-(1-2s)}\dvie_{x,y}(|y|^{1-2s}B(x)\nabla_{x,y}\widetilde{U}) - c(x)\widetilde{U}
\end{equation}
in $(T_0,T_1)\times\Omega_0\times(-Y_0,Y_0)$.
\end{lem}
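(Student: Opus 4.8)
The plan is the classical reduction of a zero weighted‑Neumann problem to the evenly reflected problem, so the whole argument amounts to ``folding'' test functions and keeping track of one sign. First, I record the one‑sided weak formulation satisfied by $U$. Since the extension equation is local and, by Theorem~\ref{exten_divergence}, $U$ is a global weak solution of it on $\{y>0\}$, for every $V\in C^\infty_c((T_0,T_1)\times\Omega_0\times[0,\infty))$ one has, writing $B(x)\nabla_{x,y}U\cdot\nabla_{x,y}V=a(x)\nabla_xU\cdot\nabla_xV+\partial_yU\,\partial_yV$,
\[
\int_0^{\infty}\!\!\int_\R\!\int_\Omega y^{1-2s}\bigl(-U\,\partial_tV+a(x)\nabla_xU\cdot\nabla_xV+\partial_yU\,\partial_yV+c(x)UV\bigr)\,dx\,dt\,dy=-\lim_{y\to0^+}\int_\R\!\int_\Omega y^{1-2s}\partial_yU\,V\,dx\,dt ,
\]
where I used \eqref{eq:partialyUv} to identify the boundary term $\frac{\Gamma(1-s)}{4^{s-1/2}\Gamma(s)}\langle H^su,V(\cdot,\cdot,0)\rangle$ from Theorem~\ref{exten_divergence} with $-\lim_{y\to0^+}\langle y^{1-2s}\partial_yU,V\rangle$. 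The $y$‑integral runs effectively over $(0,Y_0)$ when $V$ is supported in $(T_0,T_1)\times\Omega_0\times[0,Y_0)$, and for such $V$ the standing hypothesis makes the right‑hand side vanish; thus $U$ solves the homogeneous‑Neumann extension problem weakly on $(T_0,T_1)\times\Omega_0\times(0,Y_0)$.

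Next, fix $\Phi\in C^\infty_c((T_0,T_1)\times\Omega_0\times(-Y_0,Y_0))$ and put $\widetilde\Phi(t,x,y):=\Phi(t,x,-y)$ and $\Psi:=\Phi+\widetilde\Phi$, which is again a test function and even in $y$. Split the weak formulation of \eqref{eq:reflec_exten} for $\widetilde U$ into the regions $\{y>0\}$ and $\{y<0\}$. On $\{y>0\}$ we have $\widetilde U=U$. On $\{y<0\}$ substitute $y\mapsto-y$: since $\widetilde U$ from \eqref{reflex} is even in $y$ we get $\widetilde U(t,x,-y)=U(t,x,y)$, $\nabla_x\widetilde U(t,x,-y)=\nabla_xU(t,x,y)$ and $\partial_y\widetilde U(t,x,-y)=-\partial_yU(t,x,y)$, while $\partial_y\Phi(t,x,-y)=-\partial_y\widetilde\Phi(t,x,y)$, so the two sign flips in the product $\partial_y\widetilde U\,\partial_y\Phi$ cancel, and $|y|^{1-2s}$, $a(x)$, $c(x)$ are unaffected. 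Hence the $\{y<0\}$ part becomes the $\{y>0\}$ integral with $\Phi$ replaced by $\widetilde\Phi$, and adding the two contributions the whole expression equals exactly the left‑hand side of the identity in the previous paragraph with $V=\Psi|_{y\ge0}$. Every manipulation is legitimate because $\Phi$ is smooth and compactly supported while $U,\nabla_{x,y}U\in L^2_{\mathrm{loc}}(D,y^{1-2s}\,dx\,dt\,dy)$ by Theorem~\ref{exten_divergence}, so all integrals are absolutely convergent and Fubini and the change of variables apply. Since $\Psi$ is even in $y$, $\Psi|_{y\ge0}$ is a genuine element of $C^\infty_c((T_0,T_1)\times\Omega_0\times[0,Y_0))$, so the one‑sided identity forces the expression to equal $-\lim_{y\to0^+}\int_\R\int_\Omega y^{1-2s}\partial_yU\,\Psi\,dx\,dt=0$ by hypothesis; hence $\widetilde U$ is a weak solution of \eqref{eq:reflec_exten} in $(T_0,T_1)\times\Omega_0\times(-Y_0,Y_0)$.

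The only step that is not pure bookkeeping — and the one I expect to be the main obstacle — is verifying that $\widetilde U$ lies in the natural weighted energy space on the \emph{symmetric} cylinder, i.e. that $\widetilde U$ is $L^2$ with $|y|^{1-2s}$‑weighted gradient, with $\nabla_{x,y}\widetilde U$ equal a.e.\ to the reflection (odd in the $y$‑slot) of $\nabla_{x,y}U$ and carrying \emph{no} distributional mass on $\{y=0\}$; only then does the weak formulation of \eqref{eq:reflec_exten} make sense and is its integrand the one used in the splitting above. This is exactly where evenness is essential: since $|y|^{1-2s}\in A_2(\R^{n+1})$, weighted Sobolev and trace theory are available, $U$ admits a trace on $\{y=0\}$, and the even reflection is \emph{continuous} across $\{y=0\}$, so no jump — hence no surface term — appears in $\partial_y\widetilde U$; had we reflected oddly, the jump in the trace would produce a term proportional to $\delta_{\{y=0\}}$ times the Dirichlet trace, and the argument would then require that Dirichlet trace to vanish rather than the weighted‑Neumann data. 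The remaining computations are routine.
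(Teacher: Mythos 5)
Your proposal is correct and follows essentially the same route as the paper: even reflection, reduction to the one-sided weak formulation from Theorem \ref{exten_divergence}, and using the hypothesis to kill the weighted normal-derivative flux at $\{y=0\}$. The only organizational difference is that you fold a symmetric-cylinder test function $\Phi$ into its even part $\Phi+\widetilde\Phi$ and invoke the globally integrated identity, whereas the paper truncates at $|y|=\delta$, integrates by parts on $\delta<|y|<Y_0$, and sends $\delta\to0^+$ so that the two boundary terms at $y=\pm\delta$ vanish by the same hypothesis — the two bookkeeping schemes are interchangeable.
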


\begin{proof}
Let $V \in C^{\infty}_c((T_1,T_2) \times \Omega_0 \times (-Y_0, Y_0))$. We shall prove that
\begin{multline*}
\int_{T_0}^{T_1}\int_{-Y_0}^{Y_0}\int_{\Omega_0}|y|^{1-2s}\widetilde{U}\partial_tV\,dx\,dy\,dt \\
=\int_{T_0}^{T_1}\int_{-Y_0}^{Y_0}\int_{\Omega_0}|y|^{1-2s}\big(B(x)\nabla_{x,y}\widetilde{U}\nabla_{x,y}V+c(x)\widetilde{U}V\big)\,dx\,dy\,dt.
\end{multline*}
Let $\delta>0$. From \eqref{eq:Ubeforeintegration}, for any $y>0$,
\begin{align*}
\int_\R\int_\Omega U\partial_t V\,dx\,dt &= \int_\R\int_\Omega \big(a(x)\nabla_xU\nabla_xV+c(x)UV\big)\,dx \,dt \\
&\quad-\int_\R\int_\Omega|y|^{2s-1}\partial_y(|y|^{1-2s}\partial_yU)V \,dx\,dt.
\end{align*}
By multiplying this equation by $|y|^{1-2s}$, integrating in $y\in(\delta,Y_0)$, and using integration by parts we get
\begin{multline*}
\int_{T_0}^{T_1}\int_{\delta}^{Y_0}\int_{\Omega_0}|y|^{1-2s}\widetilde{U}\partial_tV\,dx\,dy\,dt \\
=\int_{T_0}^{T_1}\int_{\delta}^{Y_0}\int_{\Omega_0}|y|^{1-2s}\big(B(x)\nabla_{x,y}\widetilde{U}\nabla_{x,y}V+c(x)\widetilde{U}V\big)\,dx\,dy\,dt\\
+\int_{T_0}^{T_1}\int_{\Omega_0}\delta^{1-2s}\partial_yU(t,x,\delta)V(t,x,\delta)\,dx\,dt.
\end{multline*}
From here we readily get
\begin{multline*}
\int_{T_0}^{T_1}\int_{\delta<|y|<Y_0}\int_{\Omega_0}|y|^{1-2s}\widetilde{U}\partial_tV\,dx\,dy\,dt \\
=\int_{T_0}^{T_1}\int_{\delta<|y|<Y_0}\int_{\Omega_0}|y|^{1-2s}\big(B(x)\nabla_{x,y}\widetilde{U}
\nabla_{x,y}V+c(x)\widetilde{U}V\big)\,dx\,dy\,dt\\
+\int_{T_0}^{T_1}\int_{\Omega_0}\delta^{1-2s}\partial_yU(t,x,y)|_{y=\delta}V(t,x,-\delta)\,dx\,dt\\
+\int_{T_0}^{T_1}\int_{\Omega_0}\delta^{1-2s}\partial_yU(t,x,\delta)V(t,x,\delta)\,dx\,dt.
\end{multline*}
The conclusion follows by taking $\delta\to0$ in this last identity.
\end{proof}

\begin{rem}\label{rem:continuousspectrumextension}
If the elliptic operator $L$ has continuous spectrum,
then all the previous results are still valid. Indeed, one needs to use the corresponding spectral resolution.

Consider, for example, $L=-\Delta$ in $\Omega=\R^n$.
We can use Fourier transform $\mathcal{F}$ in the variables $t$ and $x$ to define the operator $(\partial_t + L)^s$ as
$$\langle (\partial_t - \Delta)^s u, v \rangle_{L^2(\R^{n+1})}= \int_{\R} \int_{\R^n}(i \rho + |\xi|^2)^s \mathcal{F}u(\rho, \xi) 
\overline{\mathcal{F}v(\rho, \xi)}\, d\xi \, d \rho.$$
The analogous to the expression
$$u(t,x) = \sum_{k=0}^\infty u_k(t)\phi_k(x)$$
in this case is just
$$ u(t,x) =\frac{1}{(2\pi)^{n/2}}\int_{\R^n} \widehat{u}(t,\xi)e^{i \xi\cdot x}\,d \xi$$
where the Fourier transform is taken in the $x$ variable by leaving $t$ fixed. 
The eigenvalues and eigenfunctions $(\lambda_k, \phi_k)_{k=0}^\infty$ are replaced by
$(|\xi|^2, e^{i x\cdot\xi})_{\xi\in\R^n}$. 

Consider another example, the Bessel operator $L = -\frac{d^2}{dx^2} + \frac{\lambda(\lambda-1)}{x^2}$, for $\lambda >0$, in $\Omega = (0, \infty)$. In this case we can use Hankel transform in $x$ and Fourier transform in $t$. Let
$\phi_y(x) = (yx)^{1/2} J_{\lambda-1/2}(yx) $, $x,y>0$, where $J_{\nu}$ denotes the Bessel function of the first kind with order $\nu$.
Then $L\phi_y(x) = y^2 \phi_y(x) $ and the eigenvalues and eigenfunctions $(\lambda_k,\phi_k)_{k=0}^\infty$ 
are replaced by $(y^2,\phi_y(x))_{y>0}$. The Hankel transform in the variable $x$ is defined as 
$$\mathcal{H}u(t,y) = \int^{\infty}_0 u(t,x) \phi_{y}(x)\,dx $$
and, since $\mathcal{H}^{-1}=\mathcal{H}$, we can write 
$$u(t,x) = \int^{\infty}_0 \mathcal{H}u(t,y) \phi_y(x)\,dy.$$
With this, we can let
$$\langle (\partial_t + L)^s u , v \rangle = \int_{\R} \int^{\infty}_{0} (i \rho + y^2)^s \mathcal{H}\widehat{u}(\rho,y)
\overline{\mathcal{H}\widehat{v}(\rho,y)}\,dy\, d \rho.$$

Similarly, Lemma \ref{reflexionext} holds in all these cases.
\end{rem}

\section{Proof of Theorems \ref{harn_para}  and \ref{harn_bd}}\label{proof_harn_para} 

\begin{proof}[Proof of Theorem \ref{harn_para}]
Consider the extension $U$ of $u$ given by Theorems \ref{th_exten} and \ref{exten_divergence}.
If $u\geq0$ in $(-\infty,1)\times\Omega$ then, since the heat kernel for $L$ is nonnegative, the first formula in
\eqref{solution} gives that $U\geq 0$ in $(0,1) \times B_{2r} \times [0,2)$.  
Lemma \ref{reflexionext} with $Y_0=2$ implies that $\widetilde{U}$, as defined by \eqref{reflex},
 is a nonnegative weak solution to \eqref{eq:reflec_exten}
in $(t,x,y)\in(0,1)\times B_{2r} \times(-2,2)$. The parabolic Harnack inequality due to Ishige \cite{Ish} gives
the existence of a constant $C_H>0$ such that 
\begin{align*}
\sup_{R^-} u(t,x) &= \sup_{R^-} \widetilde{U}(t,x,0)  \leq \sup_{R^- \times (-1,1)} \widetilde{U}(t,x,y)\\
&\leq C_H \inf_{R^+ \times (-1,1)} \widetilde{U}(t,x,y) \\
&\leq C_H \inf_{R^+}\widetilde{U}(t,x,0) = C_H \inf_{R^+} u(t,x).
\end{align*}

 Now we prove the local boundedness and H\"older estimates on $u$. By using the results in \cite{Ish} we get that
 $\widetilde{U}$ is locally bounded and locally parabolically Holder continuous
of order $0<\alpha<1$ in $R$. Let $K$ be a compact subset of $R$. We have
$$\|\widetilde{U}\|_{L^{\infty}(K\times (-1,1))} \leq C\|\widetilde{U}\|_{L^2(R\times (-2,2))}=2C\|U\|_{L^2(R\times (0,2))}.$$
Since $\|U\|_{L^2(R\times (0,2))}\leq C\|u\|_{L^2(\R \times \Omega)}$, we obtain
$$\|u\|_{L^{\infty}(K)} \leq \|\widetilde{U}\|_{L^{\infty}(K\times (-1,1))} \leq C \|u\|_{L^2(\R \times \Omega)}.$$
Next, from the local H\"older continuity of $\widetilde{U}$, 
$$[u]_{C^{\alpha/2,\alpha}_{t,x}(K)}=[\widetilde{U}]_{C^{\alpha/2,\alpha}_{t,x}(K\cap\{y=0\})}
\leq C\|\widetilde{U}\|_{L^{\infty}(K\times (-1,1))}\leq C\|u\|_{L^2(\R \times \Omega)}.$$
\end{proof}

\begin{rem}\label{Harnackint}
If in Theorem \ref{harn_para}  we substitute $B_{2r}$ by an open set and $B_{r}$ by a compact set contained in the open set,
the result remains valid and the constant $c$ also depends on both sets.
\end{rem}

\begin{proof}[Proof of Theorem \ref{harn_bd}]
For simplicity, and without loss of generality, we will assume that $\tilde{x}=0$.
Let $\widetilde{U}$ be the reflection in $y$ of the extension $U$ of $u$.
By Lemma \ref{reflexionext}, $\widetilde{U}$ is a nonnegative weak solution to \eqref{eq:reflec_exten} in
$(t,x,y)\in(-2,2)\times(B_{2r}(0)\cap \Omega_0)\times(-2r,2r)$ that vanishes continuously in
$(t,x,y)\in(-2,2) \times ((\Omega \setminus \Omega_0) \cap B_{2r}(0)) \times \{0 \}$.

As a first step we flatten the boundary of $\Omega_0$ inside $B_{2r}(0)$. We use a bi-Lipschitz transformation $\Psi$
such that $\Psi(0)=0$ and $\Psi(\Omega_0\cap B_{2r}(0))=\Omega_1$, where $\Omega_1$ is a new domain with flat
boundary at $x_n=0$, which can be extended as constant in $t$ and $y$.
Without loss of generality we can assume that the flat part of $B_{2r}(0)\cap \R^n_{+}$
is the flat part of the new domain $\Omega_1$. Then the transformed function $\widetilde{U}_1:=\widetilde{U}\circ\Psi^{-1}$
satisfies the same type of degenerate parabolic equation with bounded measurable coefficients in the domain
$(-2,2)\times (\R^n_{+}\cap B_{2r}(0))\times(-2r,2r)$ and vanishes continuously on
$(-2,2)\times((\R^n \setminus\R^n_{+})\cap B_{2r}(0))\times \{0\}$.

As a second step, we define a transformation which maps
$\R^{n+1} \setminus \{x_n \leq 0 , y=0 \}$ into $\R^{n+1} \cap \{x_n >0 \}$
and is extended to be constant in $t$. This construction is standard, see \cite{Stinga-Torrea-SIAM}.
After this transformation is performed, we obtain a function $\widetilde{U}_2$ that 
solves again a degenerate parabolic equation with bounded measurable coefficients
in the domain $(-2,2) \times (\R^n_{+} \cap B_{2r}(0)) \times(-2r,2r)$ and that vanishes continuously for
$(t,x,y)\in(-2,2)\times\{(x',0,y):(x')^2+y^2<(2r)^2\}$.

Now we can apply the boundary Harnack inequality of Ishige \cite{Ish} to $\widetilde{U}_2$ to get 
$$ \sup_{(-1,1) \times (\Omega \cap B_r(0))} u(t,x)=\sup_{(-1,1) \times (\R^n_{+}\cap B_r(0))} \widetilde{U}_2(t,x,0)
\leq C \widetilde{U}_2(t_0,\widetilde{x}_0,0) = u(t_0,x_0),$$
where $\widetilde{x}_0$ is the point obtained from $x_0$ via the two transformations.
\end{proof}

\begin{rem}\label{Harnackbounda}
If in Theorem \ref{harn_bd}  we substitute $B_{2r}(\tilde{x})$ by an open set and $B_{r}(\tilde{x})$ by another open subset
of the first one, the result remains still valid and the constant $C$ also depends on both open sets.
\end{rem}

\section{Transference Method}\label{trans_sec}

In this section we assume that
$$Lu=-\dvie(a(x)\nabla u)+c(x)u\quad\hbox{in}~\Omega$$
is an operator as in Section \ref{div_sec}.

\subsection{Change of variables}

Let $\tilde{\Omega}\subset \R^n$ be a domain and $h : \Omega \rightarrow \widetilde{\Omega}$ be a smooth change of variables from
$x\in\Omega$ into $\widetilde{x}=h(x)\in\widetilde{\Omega}$, 
that is, $h$ is one-to-one, onto and differentiable with inverse $h^{-1}:\widetilde{\Omega}\to\Omega$ differentiable as well.
We denote by $J_h(x)=|\det\nabla h(x)|$, for $x\in\Omega$, and $J_{h^{-1}}(\widetilde{x})=|\det\nabla h^{-1}(\widetilde{x})|$,
for $\widetilde{x}\in\widetilde{\Omega}$.
Let us define the change of variables application
$$W: L^2(\widetilde{\Omega}, J_{h^{-1}}d \widetilde{x} )\to L^2(\Omega, dx)$$
as
$$W(\widetilde{f})(x)=\widetilde{f}(h(x))\quad\hbox{for}~x\in\Omega.$$
Then $W$ is one-to-one, onto and, for any $f\in L^2(\Omega,dx)$,
$$W^{-1}(f)(\widetilde{x}) = f ( h^{-1}(\widetilde{x})),\quad~\widetilde{x}\in\widetilde{\Omega}.$$
 It is readily seen that
$$\|W\widetilde{f}\|_{L^2(\Omega,dx)}=\|\widetilde{f}\|_{L^2(\widetilde{\Omega},J_{h^{-1}}d\widetilde{x})}.$$

Let $\{\phi_k\}_{k=0}^\infty$ be the orthonormal basis of $L^2(\Omega,dx)$ consisting of
eigenfunctions of $L$. We claim that $\{\widetilde{\phi}_k:=W^{-1}\phi_k\}_{k=0}^\infty$ is
an orthonormal basis of $L^2(\widetilde{\Omega}, J_{h^{-1}}d\widetilde{x})$. Indeed,
by changing variables,
$$\int_{\widetilde{\Omega}}\widetilde{\phi}_k(\widetilde{x}) \widetilde{\phi}_\ell(\widetilde{x})J_{h^{-1}}(\widetilde{x})
\,d\widetilde{x}= \int_{\Omega} \phi_k(x)\phi_\ell(x)\,dx = \delta_{k\ell}.$$
Also, if $\widetilde{f}\in L^2(\widetilde{\Omega},J_{h^{-1}}d\widetilde{x})$
is orthogonal to each $\widetilde{\phi}_k$ then
$$0=\int_{\widetilde{\Omega}} \widetilde{f}(\widetilde{x})\widetilde{\phi}_k(\widetilde{x})
J_{h^{-1}}(\widetilde{x})\,d\widetilde{x}=\int_{\Omega} W(\widetilde{f})(x)\phi_k (x)\,dx$$
for all $k\geq0$, which gives $\widetilde{f}=0$, and the orthonormal set 
$\{\widetilde{\phi}_k\}_{k=0}^\infty$ is complete in $L^2(\widetilde{\Omega}, J_{h^{-1}}d\widetilde{x})$.

If $u\in\Dom(L)$ and we define $\widetilde{u}=W^{-1}u=u\circ h^{-1}$ then we can write
$u=W\widetilde{u}=\widetilde{u}\circ h$ and the change rule gives
$$u_{x_i}(x) = \sum^n_{k=1} \widetilde{u}_{\widetilde{x}_k}(h(x))(\nabla h(x))_{ki}$$
where $(\nabla h(x))_{ki}=\big(\frac{\partial h_k(x)}{\partial x_i} \big)_{ki}$ denotes the $ki$-th entry of the matrix $\nabla h(x)$.
From the definition of the action of $L$ on $u$ we have, for any $v\in\Dom(L)$,
 \begin{align*} \nonumber
 \langle Lu, v \rangle &= \int_{\Omega}\Big(\sum_{i,j=1}^na^{ij}(x)u_{x_i}(x) {{v_{x_j}(x)}}+c(x) u(x) {{v(x)}}\Big) \, dx \\
 &= \int_{\Omega}\bigg[\sum_{k,\ell=1}^n\Big(\sum_{i,j=1}^na^{ij}(x)(\nabla h(x))_{ki}(\nabla h(x))_{\ell j}\Big)\widetilde{u}_{\widetilde{x}_k}(h(x))
 {{\widetilde{v}_{\widetilde{x}_\ell}(h(x))}}+ c(x) u(x) {{v(x)}}\bigg]\, dx \\
 &= \int_{\widetilde{\Omega}}\big(\widetilde{a}(\widetilde{x})\nabla\widetilde{u}{{\nabla\widetilde{v}}}
 +\widetilde{c}(\widetilde{x})\widetilde{u}{{\widetilde{v}}}\big)J_{h^{-1}}(\widetilde{x})\,d\widetilde{x}
 \end{align*}
 where
 $$\widetilde{a}^{kl}(\widetilde{x}) = \sum_{i,j=1}^n a^{ij}(h^{-1}(\widetilde{x})) (\nabla h(h^{-1}(\widetilde{x})))_{ki}
 (\nabla h(h^{-1}(\widetilde{x})))_{\ell j}$$
 and
 $$\widetilde{c}(\widetilde{x})=c(h^{-1}(\widetilde{x})).$$
 With this identity we define a new operator $\widetilde{L}$ in the following way.
 Let $\widetilde{u}, \widetilde{v} \in L^2(\widetilde{\Omega},J_{h^{-1}}d\widetilde{x})$
 such that $u=W\widetilde{u}$ and $v=W\widetilde{v}$ belong to $\Dom(L)$.
We define
$$\langle\widetilde{L}\widetilde{u},\widetilde{v} \rangle := \langle Lu, v \rangle. $$ 
With this, $(\lambda_k,\widetilde{\phi}_k)_{k=0}^\infty$ are the eigenvalues and eigenfunctions of $\widetilde{L}$,
where $\lambda_k$ are the eigenvalues of $L$. Moreover,
$$\Dom(\widetilde{L}) =\Big\{\widetilde{u} \in L^2(\widetilde{\Omega},J_{h^{-1}}d\widetilde{x}):
\sum_{k=0}^\infty \lambda_k \widetilde{u}^2_k < \infty\Big\},$$
where $\displaystyle\widetilde{u}_k=\int_{\widetilde{\Omega}}\widetilde{u}\widetilde{\phi}_kJ_{h^{-1}}(\widetilde{x})\,d\widetilde{x}$.
We also notice that, if $\widetilde{u} \in L^2(\widetilde{\Omega},J_{h^{-1}}d\widetilde{x})$ and $v\in L^2(\Omega,dx)$ then 
$$ \int_{\Omega} (W \widetilde{u})(x){{v(x)}}\,dx = \int_{\widetilde{\Omega}} \widetilde{u}(\widetilde{x})(W^{-1}{{v}})(\widetilde{x})J_{h^{-1}}
(\widetilde{x})\,d\widetilde{x}.$$
Then we can formally write
$$\langle \widetilde{L} \widetilde{u}, \widetilde{v} \rangle = \langle L(W \widetilde{u}), (W\widetilde{v}) \rangle
= \langle W^{-1}LW\widetilde{u},\widetilde{v} \rangle,$$
or
$$\widetilde{L} =W^{-1}\circ L\circ W.$$
 
\subsection{Multiplication operator}

Let $M=M(x)\in C^{\infty}(\Omega)$ be a positive function.
We define the multiplication operator
$$U: L^2(\Omega, M(x)^2 dx)\to L^2(\Omega,  dx)$$
as
$$ U(\breve{u})(x)=M(x)\breve{u}(x),$$
for $\breve{u}\in L^2(\Omega, M(x)^2 dx)$. If $\{\phi_k\}_{k=0}^\infty$ is the orthonormal basis of $L^2(\Omega,dx)$
consisting of eigenfunctions of $L$ then $\{\breve{\phi}_k=U^{-1}\phi_k\}_{k=0}^\infty$ is an orthonormal basis of $L^2(\Omega, M(x)^2 dx)$.

Now given $u\in\Dom(L)$ we define $\breve{u}(x)=U^{-1}u(x)=M(x)^{-1}u(x)$, so that
$$u_{x_i}(x) = M(x) \breve{u}_{x_i}(x) + M_{x_i}(x)\breve{u}(x).$$
Therefore, for any $v\in\Dom(L)$,
  \begin{align*}
  \langle Lu,v \rangle  &= \int_{\Omega}\big(a^{ij}(x) u_{x_i} {{v_{x_j}}} +c(x) u {{v}}\big)\, dx\\ 
  &=\int_{\Omega}\bigg[ a^{ij}(x) \bigg( \breve{u}_{x_i}+\frac{M_{x_i}(x)}{M(x)}\breve{u}\bigg)
  \bigg({{\breve{v}_{x_j}}}+\frac{M_{x_j}(x)}{M(x)}{{\breve{v}}}\bigg)+ c(x) \breve{u}{{\breve{v}}}\bigg]M(x)^2\,dx.
  \end{align*}
This allows us to define the operator $\breve{L}$ in the following way.
For $\breve{u},\breve{v} \in  L^2(\Omega, M(x)^2 dx)$ such that $u=U(\breve{u})=M\cdot\breve{u}$
and $v=U(\breve{v})=M\cdot\breve{v}$ are in $\Dom(L)$, we define
$$\langle \breve{L} \breve{u}, \breve{v} \rangle:= \langle Lu, v \rangle.$$
With this, $(\lambda_k,\breve{\phi}_k)_{k=0}^\infty$ are the eigenvalues and eigenfunctions of $\breve{L}$,
where $\lambda_k$ are the eigenvalues of $L$.
Whence, 
$${\Dom(\breve{L})=}\Big\{\breve{u}\in L^2(\Omega, M(x)^2dx):\sum_{k=0}^\infty \lambda_k\breve{u}^2_k < \infty \Big\},$$
where $\displaystyle\breve{u}_k=\int_{\Omega}\breve{u}\breve{\phi}_kM(x)^2\,dx=\int_{\Omega}u\phi_k\,dx=u_k$. Observe that
$$\int_{\Omega} U(\breve{u})(x){{v(x)}}\,dx = \int_{\Omega} \breve{u}(x) U^{-1}{{v (x)}} M(x)^2\,dx.$$
Then we can formally write
$$\langle \breve{L}\breve{u},\breve{v} \rangle = \langle L(U\breve{u}), (U\breve{v}) \rangle
= \langle U^{-1}LU\breve{u},\breve{v} \rangle,$$
or
$$\breve{L} =U^{-1}\circ L\circ U.$$

\subsection{Composition of multiplication and change of variables}

We consider the following composition of the multiplication operator
$U$ with the change of variables operator $W$:
$$U \circ W : L^2(\widetilde{\Omega},M(h^{-1}(\widetilde{x}))^2J_{h^{-1}}d\widetilde{x}) \rightarrow L^2({\Omega},  dx ).$$
Notice that if $\bar{f}\in L^2(\widetilde{\Omega},M(h^{-1}(\widetilde{x}))^2J_{h^{-1}}d\widetilde{x})$ then
$$\int_{\Omega}|[(U\circ W)\bar{f}](x)|^2\,dx=\int_{\widetilde{\Omega}}|\bar{f}(\widetilde{x})|^2M(h^{-1}(\widetilde{x}))^2
J_{h^{-1}}(\tilde{x})\,d\widetilde{x}.$$
By using a similar technique as we used in cases of $W$ and $U$ separately, we can define
a new operator $\bar{L}$ in the following way.
For $\bar{u}, \bar{v} \in L^2(\widetilde{\Omega}, M(h^{-1}(\widetilde{x}))^2J_{h^{-1}}d\widetilde{x} )$
such that $u:=(U \circ W)\bar{u}$ and $v:=(U \circ W)\bar{v}$ are in $\Dom(L)$ we let
$$\langle \bar{L}\bar{u}, \bar{v} \rangle = \langle Lu, v \rangle.$$

By proceeding as in the previous cases we can formally write
$$\bar{L}=(U\circ W)^{-1}\circ L \circ (U\circ W).$$

\subsection{Transference method from $(\partial_t+L)^s$ to $(\partial_t+\bar{L})^s$}

Now we consider the parabolic operators $H= \partial_t + L$ and $\bar{H}=\partial_t+\bar{L}$,
where $L$ and $\bar{L}$ are as above. If $\bar{u}=\bar{u}(t,\widetilde{x})$
is a function of $t\in\R$ and $\widetilde{x}\in\widetilde{\Omega}$ then
the composition operator will act on $\bar{u}$ by leaving the variable $t$ fixed:
$$(U \circ W)\bar{u}(t,x)=M(x)\bar{u}(t,h(x)),\quad\hbox{for}~x\in\Omega,$$
so that
$$U \circ W :L^2(\R,dt; L^2(\widetilde{\Omega},M(h^{-1}(\widetilde{x}))^2J_{h^{-1}}d\widetilde{x}))
\rightarrow L^2(\R, dt; L^2({\Omega}, dx ))=L^2(\R\times\Omega).$$
Recall that
$$\Dom (H) = \Big\{ u \in L^2(\R \times \Omega): \int_{\R} \sum_{k=0}^\infty |(i \rho + \lambda_k)| |\widehat{u}_k(\rho)|^2\, d\rho<\infty\Big\}$$
and that, for $u\in \Dom(H)$ any $v\in C^\infty_c(\R\times\Omega)$,
$$\langle Hu,v \rangle_{L^2(\R\times\Omega)}=\int_{\R}\int_\Omega\Big(-u v_t+\sum_{i,j=1}^na^{ij}(x)u_{x_i}(t,x)v_{x_j}(t,x)+c(x) u(t,x) v(t,x)\Big) \,dx \, dt.$$
Now, for $\bar{u} \in L^2(\R,dt ; L^2(\widetilde{\Omega},M(h^{-1}(\widetilde{x}))^2J_{h^{-1}}d\widetilde{x}))$
such that $u:=(U \circ W)\bar{u}\in\Dom(H)$, and $v:=(U \circ W)\bar{v}$,
we define the parabolic operator
$$\langle \bar{H} \bar{u}, \bar{v} \rangle := \langle Hu,v \rangle.$$
As a matter of fact, we can write,
\begin{align*}
\langle Hu,v \rangle_{L^2(\R\times\Omega)}  &= \int_{\R}\int_\Omega\bigg[-M(x)\bar{u}(t,h(x)) M(x) \bar{v}_t(t,h(x))  \\ 
&\qquad\qquad +\sum_{i,j=1}^na^{ij}(x)\Big( M_{x_i}(x) \bar{u}(t,h(x)) + \sum_{k=1}^nM(x)\bar{u}_{\widetilde{x}_k}(t,h(x))
(\nabla h(x))_{ki}  \Big) \\
&\qquad\qquad\quad\times \Big( M_{x_j}(x) \bar{v}(t,h(x)) + \sum_{\ell=1}^nM(x)\bar{v}_{\widetilde{x}_\ell}(t,h(x))
(\nabla h(x))_{\ell j}\Big) \\
&\qquad\qquad+c(x) M(x)\bar{u}(t,h(x))M(x) \bar{v}(t,h(x))\bigg]\,dx\,dt \\
&= \int_{\R}\int_{\widetilde{\Omega}}\bigg[-\bar{u}\bar{v}_t  \\ 
&\qquad\qquad +\sum_{i,j=1}^na^{ij}(h^{-1}(\widetilde{x}))\bigg( \frac{M_{x_i}(h^{-1}(\widetilde{x}))}{M(h^{-1}(\widetilde{x}))}\bar{u} + \sum_{k=1}^n
\bar{u}_{\widetilde{x}_k}(\nabla h(h^{-1}(\widetilde{x})))_{ki}  \bigg) \\
&\qquad\qquad\quad\times \bigg( \frac{M_{x_j}(h^{-1}(\widetilde{x}))}{M(h^{-1}(\widetilde{x}))} \bar{v}(t,\widetilde{x})
+ \sum_{\ell=1}^n\bar{v}_{\widetilde{x}_\ell} (\nabla h(h^{-1}(\widetilde{x})))_{\ell j}\bigg) \\
&\qquad\qquad+c(h^{-1}(\widetilde{x})) \bar{u} \bar{v}\bigg]
M(h^{-1}(\widetilde{x}))^2J_{h^{-1}}\,d\widetilde{x}\,dt \\
&=  \langle \bar{H} \bar{u}, \bar{v} \rangle 
\end{align*}
By using a similar argument as before we can formally write
$$\bar{H} = (U \circ W)^{-1} \circ H \circ (U \circ W).$$

Next, for $u\in\Dom(H)$ set $\displaystyle u_k(t)=\int_{\Omega}u\phi_k\,dx$, and write
$$u(t,x) =\sum_{k=0}^\infty u_k(t) \phi_k(x).$$
We know from the previous discussion that $(\lambda_k,\bar{\phi}_k)_{k=0}^\infty$ is the family
of eigenvalues and eigenfunctions of $\bar{L}$, where
$$\bar{\phi}_k(\widetilde{x})=\frac{1}{M(h^{-1}(\widetilde{x}))} \phi_k(h^{-1}(\widetilde{x}))\quad\hbox{for}~x\in\widetilde{\Omega}.$$
So if $u(t,x)\in L^2(\R,dt; L^2(\widetilde{\Omega},M(h^{-1}(\widetilde{x}))^2J_{h^{-1}}d\widetilde{x}))$, then
$$\bar{u}(t,\widetilde{x}) =\sum_{k=0}^\infty\bar{u}_k(t)\frac{1}{M(h^{-1}(\widetilde{x}))} \phi_k(h^{-1}(\widetilde{x})).$$
But
$$\bar{u}_k(t)=\int_{\widetilde{\Omega}}\bar{u}(t,\widetilde{x})\bar{\phi}_k(\widetilde{x})
M^2(h^{-1}(\widetilde{x}))J_{h^{-1}}\,d\widetilde{x}=\int_\Omega u(t,x)\phi_k(x)\,dx=u_k(t).$$
Hence,
$$\langle \bar{H} \bar{u}, \bar{v}\rangle  = \langle Hu,v \rangle=
\int_{\R} \sum_{k=0}^\infty(i\rho + \lambda_k)\widehat{u}_k(\rho) \overline{\widehat{v}_k(\rho)}\,d \rho
=\int_{\R} \sum_{k=0}^\infty(i\rho + \lambda_k)\widehat{\bar{u}}_k(\rho) \overline{\widehat{\bar{v}}_k(\rho)}\,d \rho.$$
Therefore, for any $0\leq s\leq1$,
$$\langle \bar{H}^s \bar{u}, \bar{v}\rangle
=\int_{\R} \sum_{k=0}^\infty(i\rho + \lambda_k)^s\widehat{\bar{u}}_k(\rho) \overline{\widehat{\bar{v}}_k(\rho)}\,d \rho
=\langle H^su,v \rangle.$$
Whence, we can formally write
$$\bar{H}^s = (U \circ W)^{-1}\circ H^s\circ (U \circ W).$$

\begin{proof}[Proof of Theorem \ref{thm:transference}]
Let us first show how to transfer Theorem \ref{harn_para}.
Let $\bar{u} \in \Dom(\bar{H}^s)$ be a solution to
$$\begin{cases}
\bar{H}^s \bar{u} = 0 &\hbox{in}~(0,1)\times\widetilde{O} \\
\bar{u} \geq 0&\hbox{in}~(-\infty,1) \times \widetilde{\Omega},
\end{cases}$$
for some open set $\widetilde{O} \subset \widetilde{\Omega}$.
From the definition, $\langle \bar{H}^s\bar{u}, \bar{v} \rangle = \langle H^su,v \rangle$,
where $u = (U \circ W)\bar{u}$ and $ v = (U \circ W)\bar{v}$. Then, by taking any $v\in C^\infty_c((0,1)\times O)$,
where $O=h^{-1}(\tilde{O})$,
we can let $\bar{v}=(U\circ W)^{-1}v\in C_c^{\infty}((0,1) \times \widetilde{O})$ and thus conclude that
$H^s u = 0 $ in $(0,1) \times h^{-1}(\widetilde{O})=(0,1)\times O$. Also $u \geq 0$ in
$(-\infty,1) \times h^{-1}(\widetilde{\Omega})=(-\infty,1)\times\Omega$. 
Let $\widetilde{J}$ be a compact subset of $\widetilde{O}$. Then $h^{-1}(\widetilde{J})$
is a compact subset of $O$ and, by Harnack inequality for $H^s$, (see Remark \ref{Harnackint}),
$$\sup_{(\frac{1}{4},\frac{1}{2}) \times h^{-1}(\widetilde{J})} u \leq C \inf_{(\frac{3}{4},1) \times h^{-1}(\widetilde{J})}u.$$
Since $M(x)$ is strictly positive, continuous and bounded in $h^{-1}(\widetilde{J})$, 
$$\sup_{(\frac{1}{4},\frac{1}{2}) \times h^{-1}(\widetilde{J})} W \bar{u} \leq C' \inf_{(\frac{3}{4},1) \times h^{-1}(\widetilde{J})} W \bar{u}.$$
The change of variable $h$ is a smooth diffeomorphism, so that
$$\sup_{(\frac{1}{4},\frac{1}{2}) \times \widetilde{J}}  \bar{u} \leq C' \inf_{(\frac{3}{4},1) \times \widetilde{J}}  \bar{u}.$$
Thus Harnack inequality holds for $\bar{H}^s$. Let $\widetilde{K}$ be a compact subset of $(0,1)\times\widetilde{O}$.
Then $K=h^{-1}(\widetilde{K})$ is a compact subset of $(0,1)\times O$ and $u$ is parabolically H\"older continuous in $K$ with
$$\|u\|_{C^{\alpha/2,\alpha}_{t,x}(K)}\leq C\|u\|_{L^2(\R\times\Omega)}=
C\|\bar{u}\|_{L^2(\R,dt;L^2(\widetilde{\Omega},M(h^{-1}(\widetilde{x}))^2J_{h^{-1}}d\widetilde{x}))}.$$
Notice that $\bar{u}(t,\widetilde{x})=[(U \circ W)^{-1}u](t,\widetilde{x})=\frac{1}{M(h^{-1}(\widetilde{x}))}u(t,h^{-1}(\widetilde{x}))$, which,
for any $(t_i,x_i)=(t_i,h^{-1}(\widetilde{x}_i))\in K$, $i=1,2$, gives
\begin{align*}\nonumber
|\bar{u}(t_1,\widetilde{x}_1)-\bar{u}(t_2,\widetilde{x}_2)|&=\left| \frac{u(t_1,x_1)}{M(x_1)}- \frac{u(t_2,x_2)}{M(x_2)} \right| \\
& \leq  \left| \frac{u(t_1,x_1)}{M(x_1)}- \frac{u(t_1,x_1)}{M(x_2)} \right| + \left| \frac{u(t_1,x_1)}{M(x_2)}- \frac{u(t_2,x_2)}{M(x_2)} \right| \\
&\leq C \|M^{-1}\|_{C^\alpha_x(K)}\|u\|_{C^{\alpha/2,\alpha}_{t,x}(K)}d((t_1,x_1),(t_2,x_2))^\alpha \\
&\leq C'\|\bar{u}\|_{L^2(\R,dt;L^2(\widetilde{\Omega},M(h^{-1}(\widetilde{x}))^2J_{h^{-1}}d\widetilde{x}))}
d((t_1,\widetilde{x}_1),(t_2,\widetilde{x}_2))^\alpha
\end{align*}
where $d$ denotes the parabolic distance. In the last identity we used the fact that $h^{-1}$ is a smooth diffeomorphism.

Let us next transfer the boundary Harnack inequality of Theorem \ref{harn_bd}.
Again, for simplicity and without loss of generality, we consider $\tilde{x}=0$. Let $\bar{u} \in \Dom(\bar{H}^s)$ be a solution to
$$\begin{cases}\nonumber
\bar{H}^s\bar{u} = 0&\hbox{in}~(-2,2) \times(\widetilde{\Omega}_0\cap \widetilde{B}_{2r}(0)) \\
\bar{u}  \geq  0&\hbox{in}~(-\infty, 2) \times \widetilde{\Omega},
\end{cases}$$
such that $\bar{u}$ vanishes continuously on
$(-2,2) \times ( (\widetilde{\Omega}\setminus \widetilde{\Omega}_0) \cap \widetilde{B}_{2r}(0))$.
Let $(t_0, \widetilde{x}_0)$ be a fixed point in $(-2,2) \times \widetilde{\Omega}_0$
such that $t_0 > 1$.
Then $H^su = 0$ in $(-2,2) \times (\Omega_0\cap h^{-1}(\widetilde{B}_{2r}(0)))$,
where $\Omega_0=h^{-1}(\widetilde{\Omega}_0)$,
$u\geq0$ in $(-\infty,2)\times\Omega$ and,
as $h$ is a smooth diffeomorphism, we can also see that
$u = (U \circ W) \bar{u}$ vanishes continuously in
$(-2,2) \times((\Omega \setminus \Omega_0) \cap h^{-1}(\widetilde{B}_{2r}(0)))$.
We assume, again for simplicity, that $h(0)=0$ and let $K = h^{-1}(\widetilde{B}_{r}(0))$.
Then $0\in K$ and $K$ is compactly contained in $h^{-1}(\widetilde{B}_{2r}(0)))$.
We know that (see Remark \ref{Harnackbounda})  
$$\sup_{(-1,1) \times (\Omega_0 \cap K)} u(t,x) \leq C u(t_0,x_0),$$
for $C>0$. Since $M>0$ is bounded and continuous, and $h$ is a smooth diffeomorphism,
$$\sup_{(-1,1) \times (\widetilde{\Omega}_0 \cap\widetilde{B}_r(0))} \bar{u}(t,\widetilde{x}) \leq C' \bar{u}(t_0,\widetilde{x}_0).$$
\end{proof}

\begin{rem}
As it was explained in Remark \ref{rem:continuousspectrumextension}, one can check that
if the differential operator $L$ has continuous spectrum, then all the previous transference results are still valid.
\end{rem}

Now we use the transference method of Theorem \ref{thm:transference} to prove the following result.

\begin{thm}\label{thm:trans}
	Theorems \ref{harn_para} and \ref{harn_bd} hold true for solutions $u$ to $(\partial_t+L)^su=f$,
	where $L$ is any of the elliptic operators in $(7)$--$(10)$.
\end{thm}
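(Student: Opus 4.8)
The plan is to prove Theorem~\ref{thm:trans} via a \emph{transference principle} that relates the operators $L$ in $(7)$--$(10)$ to those in $(1)$--$(6)$ through multiplication by an explicit positive weight. The key observation is that each elliptic operator with gradient term in $(7)$--$(10)$, acting on its natural weighted space $L^2(\Omega,d\eta)$, is unitarily equivalent to one of the operators $L$ in $(1)$--$(6)$ acting on $L^2(\Omega,dx)$ (or a subinterval thereof). Concretely, if $\rho(x)$ denotes the square root of the density of $d\eta$ with respect to Lebesgue measure, then conjugation by the multiplication operator $U\colon g\mapsto \rho\, g$ maps $L^2(\Omega,d\eta)$ isometrically onto $L^2(\Omega,dx)$, and one checks that $U L U^{-1} = \widetilde L$, where $\widetilde L$ is precisely the corresponding operator in $(1)$--$(6)$ (this is the classical correspondence: Ornstein--Uhlenbeck $\leftrightarrow$ Hermite/harmonic oscillator, the Laguerre families of $(8)$ $\leftrightarrow$ the Laguerre operator of $(3)$ or the harmonic oscillator, the ultraspherical operator of $(9)$ $\leftrightarrow$ that of $(4)$, and the Bessel operator of $(10)$ $\leftrightarrow$ that of $(6)$). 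I would begin by recording these conjugation identities explicitly in Section~\ref{trans_sec}, citing the standard references already used in the paper.

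Next I would transfer this to the parabolic and fractional setting. Since $U$ acts only in the $x$-variable and commutes with $\partial_t$, we get $U (\partial_t + L) U^{-1} = \partial_t + \widetilde L$ on $L^2(\R\times\Omega,dt\,d\eta)\cong L^2(\R\times\Omega,dt\,dx)$. Because the spectral/Fourier definition of fractional powers commutes with unitary conjugation, this immediately gives
\[
U (\partial_t+L)^s U^{-1} = (\partial_t+\widetilde L)^s = \widetilde H^s,
\]
and hence $\Dom(H^s) = U^{-1}\Dom(\widetilde H^s)$ with $H^s u = f$ if and only if $\widetilde H^s (Uu) = Uf$. Thus a solution $u$ of $(\partial_t+L)^su=0$ in a space-time cylinder corresponds to a solution $v:=\rho u$ of $(\partial_t+\widetilde L)^s v=0$ in the same cylinder, and $u\geq 0$ iff $v\geq 0$ since $\rho>0$. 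The sign condition, the vanishing-continuously condition on the boundary part, and the domain geometry are all preserved because $\rho$ is smooth and strictly positive on the relevant regions.

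The final step is to pull the conclusions of Theorems~\ref{harn_para} and \ref{harn_bd} back through the weight. Applying those theorems to $v$, on a ball $B_{2r}\subset\subset\Omega$ (with the weight bounded above and below by positive constants $c_1(r)\leq\rho\leq c_2(r)$ on $B_{2r}$), yields $\sup_{R^-} v\leq c\inf_{R^+}v$, and since $u = \rho^{-1} v$ with comparable bounds on $\rho$ over the fixed compact region, we recover $\sup_{R^-}u\leq c'\inf_{R^+}u$ with $c'$ depending additionally on $r$ and the fixed operator data; the boundary Harnack inequality transfers identically. For the H\"older estimate one uses that multiplication by the smooth positive function $\rho$ is a bounded operator on $C^{\alpha/2,\alpha}_{t,x}(K)$ and on $L^2$ of the relevant sets, absorbing the extra factors into the constant $C$. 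The main obstacle I anticipate is purely bookkeeping rather than conceptual: one must verify case by case that each $L$ in $(7)$--$(10)$ really does conjugate to one of $(1)$--$(6)$ \emph{exactly} (getting the constants and lower-order terms right, e.g.\ the shift by $n$ for the harmonic oscillator, or the precise index $\alpha_i$ in the Laguerre and Bessel cases), and that the natural measures $d\eta$ have densities whose square roots are smooth and positive on $\Omega$ so that no regularity is lost — degeneracies of $\rho$ only occur on $\partial\Omega$, which is harmless for the interior statement and is exactly matched by the corresponding boundary behavior in the boundary Harnack statement.
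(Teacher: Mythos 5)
Your overall strategy --- conjugating $(\partial_t+L)^s$ by a unitary map into one of the operators $(1)$--$(6)$, noting that the spectral definition of the fractional power commutes with the conjugation, and then pulling the Harnack and H\"older conclusions back through a weight that is bounded above and below on compact subsets --- is exactly the paper's transference method. However, there is a genuine gap: you assert that the unitary equivalence is implemented by \emph{multiplication alone}, $U\colon g\mapsto\rho\,g$ with $\rho=\sqrt{d\eta/dx}$. This cannot work for three of the four Laguerre operators in $(8)$. For instance, for $L=\sum_i\bigl(-x_i\partial_{x_i}^2-(\alpha_i+1)\partial_{x_i}+\tfrac{x_i}{4}\bigr)$ the principal coefficient of $\partial_{x_i}^2$ is $x_i$, whereas the target operator in $(3)$ has principal part $-\tfrac14\Delta$; conjugation by a multiplication operator only modifies lower-order terms and leaves the second-order coefficients untouched, so no choice of $\rho$ can match the principal symbols. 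The paper's transference operator is the composition $U\circ W$ of a multiplication with a genuine change of variables $h$, and for these Laguerre cases one must take $h(x)=(x_1^2,\dots,x_n^2)$ to convert $x_i\partial_{x_i}^2$ into a constant-coefficient principal part. Your proposal omits the change-of-variables component entirely.

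This omission has a second, downstream consequence that you would also need to address: once a nontrivial diffeomorphism $h$ is involved, the preimage $h^{-1}(B_{2r})$ of a ball is no longer a ball, so Theorems \ref{harn_para} and \ref{harn_bd} cannot be applied verbatim to the transferred function. The paper handles this through Remarks \ref{Harnackint} and \ref{Harnackbounda}, which extend the interior and boundary Harnack inequalities from concentric balls to general open sets and compact (respectively open) subsets thereof, with constants depending on both sets. For the cases where $h=\mathrm{id}$ suffices --- $(2)\to(7)$, the $\psi_k^\alpha$ Laguerre case, $(4)\to(9)$, and $(6)\to(10)$ --- your argument is complete and coincides with the paper's; the missing ingredient is the change of variables (and the accompanying generalization of the Harnack statements) needed for the remaining Laguerre systems.
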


\begin{proof}
We have already proven Theorem \ref{thm:trans} for the elliptic operators $L$ in $(2)$--$(6)$.

{\it Transference from $(2)$ to $(7)$.}
In this case, $H^s=(\partial_t-\Delta+|x|^2-n)^s$ in $\R\times\Omega=\R\times\R^n$ with Lebesgue measure
and with zero boundary condition at infinity whereas $\bar{H}^s=(\partial_t- \Delta + 2x\cdot\nabla)^s$
in $\R\times\widetilde{\Omega}=\R\times\R^n$ with Gaussian measure $\pi^{-n/4}e^{-|x|^2/2} dx$. For the transference
we use $h(x)=x$ and $M(x) = \pi^{-n/4}e^{-|x|^2/2}$.
 
{\it Transference from $(3)$ to $(8)$.}
In all these examples we have $\widetilde{\Omega}=\Omega$. In the first three cases
we start with $H^s=(\partial_t-\frac{1}{4}(\Delta + |x|^2 + \sum^n_{i=1} \frac{1}{x^2_i} \left( \alpha^2_i - \frac{1}{4} \right)))^s$,
for $\alpha_i>-1$, in $\R\times\Omega=\R\times(0, \infty)^n$. By using the transference method
we can obtain the result for the other Laguerre systems. 
 \begin{itemize}
 \item For $\bar{H}^s=(\partial_t + \sum^n_{i=1}(  -x_i \frac{\partial^2}{\partial x^2_i}  -(\alpha_i + 1) \frac{\partial}{\partial x_i} + \frac{x_i}{4}))^s$ with measure $ x^{\alpha_1}_1 \cdot \cdot \cdot x^{\alpha_n}_n  dx$,
 which is related to the Laguerre system $l^{\alpha}_k$, we choose $h(x) = (x^2_1, x^2_2,\ldots,x^2_n)$
 and $M(x) = 2^{n/2} x_1^{\alpha_1 + 1/2}\cdots x_n^{\alpha_n + 1/2}$.
 \item For $\bar{H}^s=(\partial_t + \frac{1}{4}(-\Delta + |x|^2) - \sum^n_i \frac{2 \alpha_i + 1}{4x_i} \frac{\partial}{\partial x_i})^s$
 with measure $x^{2 \alpha_1+1}_1 \cdot \cdot \cdot x^{2 \alpha_n + 1}_n dx$,
 which is related to the Laguerre system $\psi^{\alpha}_k$, we choose $h(x)=x$
 and $M(x) = x_1^{\alpha_1 + 1/2}\cdots x_n^{\alpha_n + 1/2}$. 
 \item For $\bar{H}^s=(\partial_t+\sum^n_{i=1}(-x_i \frac{\partial^2}{\partial x^2_i}-\frac{\partial}{\partial x_i}+\frac{x_i}{4}+\frac{\alpha^2_i}{4x_i}))^s$ with Lebesgue measure,
 which is related to the Laguerre system $\mathcal{L}^{\alpha}_k$, we choose $h(x) = (x^2_1, x^2_2,\ldots,x^2_n)$
 and $M(x) = 2^{n/2} x_1^{ 1/2}\cdots x_n^{ 1/2}$.
 \end{itemize}
In the last case, we start with $H^s=(\partial_t-\frac{1}{4}(\Delta + |x|^2 + \sum^n_{i=1} \frac{1}{x^2_i} \left( \alpha^2_i - \frac{1}{4} \right))-\frac{\alpha+1}{2})^s$. Thus, we apply the transference method for $\bar{H}^s=(\partial_t + \sum^n_{i=1}( - x_i \frac{\partial^2}{\partial x^2_i} - (\alpha_i + 1 - x_i) \frac{\partial}{\partial x_i}))^s$
 with measure $x_1^{\alpha_1}e^{-x_1}\cdots x_ne^{-x_n}dx$,
 which is related to the Laguerre polynomials system $L^{\alpha}_k$, by choosing $h(x) = (x^2_1, x^2_2,\ldots,x^2_n)$
 and $M(x) = 2^{n/2} e^{- |x|^2/2} x_1^{\alpha_1 + 1/2}\cdots x_n^{\alpha_n + 1/2}$. 
 
{\it Transference from $(4)$ to $(9)$.}
In this case, $H^s=(\partial_t - \frac{d^2}{dx^2}  + \frac{\lambda(\lambda-1)}{\sin^2x})^s$
in $\R\times\Omega=\R\times(0,\pi)$ with Lebesgue measure,
and $\bar{H}^s=(\partial_t- \frac{d^2}{dx^2} - 2 \lambda \cot x \frac{d}{dx} + \lambda^2)^s$
in $\R\times\widetilde{\Omega}=\R\times(0,\pi)$ with measure $\sin^{2\lambda}x dx$. For the transference method we use $h(x)=x$
and $M(x) =(\sin x)^\lambda$.
  
{\it Transference from $(6)$ to $(10)$.}
Here $\Omega=\widetilde{\Omega}=(0,\infty)$,
$H^s=(\partial_t - \frac{d^2}{dx^2} + \frac{\lambda^2-\lambda}{x^2})^s$
in $\R\times(0,\infty)$ with Lebesgue measure and $\bar{H}^s=(\partial_t-\frac{d^2}{dx^2} - \frac{2 \lambda}{x} \frac{d}{dx})^s$
in $\R\times(0,\infty)$ with measure $x^{2 \lambda} dx $. For the transference method we use $h(x)=x$
and $M(x) =x^\lambda$.
\end{proof}

\bigskip

\noindent\textbf{Acknowlegments.}~We are grateful to the referees for their detailed reading of the paper
and many useful suggestions that helped us to improve the presentation.

\medskip



\end{document}